\newtheorem{df}{Definition}[section]
\newtheorem{thm}[df]{Theorem}
\newtheorem{pro}[df]{Proposition}
\newtheorem{rema}[df] {Remark}
\newtheorem{lem}[df] {Lemma}
\def\sfstp{{\hskip-1em}{\bf.}{\hskip1em}}
\title{ \bf  Factorizations of EP Banach space operators \\
and EP Banach algebra elements \/}
\author { Enrico Boasso}
\date{   }
\begin{document}

\maketitle \thispagestyle{empty} 


\setlength{\baselineskip}{14pt}

\begin{abstract}\noindent EP Banach space operators and
EP Banach algebra elements are characterized using different
kinds of factorizations. The results obtained generalize
well-known character-
izations of EP matrices, EP Hilbert space
operators and EP $C^*$-algebra elements. Furthermore, new results that
hold in these contexts are presented.\par
\noindent \it Keywords: \rm Moore-Penrose inverse, EP Banach space
operator, EP Banach algebra elem-
ent, factorization.

\end{abstract}

\section{\sfstp Introduction}\setcounter{df}{0}
\
\indent A complex matrix $T$ is said to be EP, if it commutes with its
Moore-Penrose inverse $T^{\dag}$. 
Moreover, the notion under consideration was extended to Hilbert
space operators and $C^*$-algebra elements, and it consists in a 
generalization of normal matrices and operators,
see the introductory section of \cite{DKP}.
Furthermore, thanks to the concept of hermitian Banach algebra element, in \cite{R}
V. Rako\v cevi\'c extended the notion of Moore-Penrose inverse to elements of a Banach algebra,
which led to study EP Banach space operators and EP Banach algebra
elements, see \cite{B}.\par

\indent The relationships among EP matrices and operators and the product operation have been object of
a particular attention. On the one hand, several articles studied when the
product of two EP matrices, Hilbert or Banach space operators, or elements of a $C^*$-algebra or a
Banach  algebra is again EP, see \cite{B,K} and the bibliography of these articles. 
On the other hand, D. Drivarialis, S. Karamasios and D. Pappas in \cite{DKP}
and D. S. Djordjevi\' c, J. J. Koliha and I. Straskaba in \cite{DKS} have recently
characterized EP Hilbert space operators and EP $C^*$-algebra
elements respectively through several different factorizations.
Note  that one of the main lines of research concerning 
 EP matrices and EP operators consists in characterizing them  
through factorizations. \par

\indent The objective of the present article is to characterize EP Banach space
operators and EP Banach algebra elements using factorizations to extend
results of \cite{DKP, DKS} to the mentioned contexts. Actually, three different kind of
factorizatiosn will be considered. It is worth noticing that due to the lack of involution
on a Banach algebra, and in particular on the Banach algebra of bounded
and linear maps defined on a Banach space, the proofs not only are different
from the ones known for matrices or Hilbert space operators, but also they give a new
insight into the cases where the involution does exist. Furthermore, thanks to the approach developed in this work, new results
in the frames both of Banach algebras and of $C^*$-algebras will be presented.
\par

\section{\sfstp Preliminary definitions and results}

\
\indent From now on $X$ and $Y$ will denote Banach spaces and $L(X, Y)$ will stand for the
Banach algebra of all bounded and linear maps defined on $X$ with values 
in $Y$. As usual, when $X=Y$, $L(X,Y)$ will be
denoted by $L(X)$.
In addition, if $T\in L(X,Y)$, then $N(T)$ and $R(T)$ will stand for the null space and the
range of $T$ respectively. \par
 
\indent On the other hand, $A$ will denote a unital Banach algebra and $e\in A$
will stand for the unit element of $A$. 
If $a\in A$, then $L_a, R_a \colon A\to A$
will denote the maps defined by
left  and right multiplication respectively, that is, $L_a(x)=ax$ and $R_a(x)=xa$,   
where $x\in A$. Moreover, the following notation will be used: 
$$
N(L_a)= a^{-1}(0),\hskip.3truecm R(L_a)=aA, \hskip.3truecm N(R_a)= a_{-1}(0), \hskip.3truecm R(R_a)=Aa.
$$

\indent Recall that an element $a\in A$ is said to be \it{regular}, \rm 
if it has a \it{generalized inverse}, \rm namely if there exists $b\in A$ such that
$a=aba$. Furthermore, a generalized inverse $b$ of a regular
element $a\in A$ will be said to be \it{normalized}, \rm if $b$ is regular
and $a$ is a generalized inverse of $b$, equivalently,
$a=aba$ and $b=bab$. 
Note that if $b$ is a generalized inverse of $a$,
then $c=bab$ is a normalized generalized inverse
of $a$. \par

\indent Next follows the key notion in the definition of 
Moore-Penrose invertible Banach algebra elements,
see \cite{V}.\par

\begin{df} \label{df1}Given a unital Banach algebra $A$, an element $a\in A$ is said to be hermitian,
if $\parallel exp(ita)\parallel =1$,  for all $ t\in\Bbb R$.
\end{df}

\indent Recall that  if $A$ is a $C^*$-algebra, then
$a\in A$ is hermitian if and only if $a$ is self-adjoint,
see \cite[Proposition 20, Chapter I, Section 12]{BD}. 
Moreover, $\mathcal{H}=\{a\in A\colon \hbox{  }$a$ \hbox{ is 
hermitian}\}\subseteq A$ is a closed linear vector space over the real field,
see \cite{V,D}. Since $A$ is unital, $e\in \mathcal{H}$,
which implies that $a\in \mathcal{H}$ if and only if $e-a\in \mathcal{H}$.
As regard equivalent definitions and the main properties of hermitian Banach 
algebra elements and hermitian Banach space operators, see \cite{BD, D, L, P, V}.
\par

\indent In \cite{R} V. Rako\v cevi\'c introduced the notion of Moore-Penrose invertible Banach algebra
element. Next the definition of this object  will be considered.\par

\begin{df}\label{df2} Let $A$ be a unital Banach algebra and consider $a\in A$. If there exists a normalized
generalized inverse $x\in A$ of $a$ such that $ax$ and $xa$ are hermitian elements of $A$,
then  $x$ will be said to be the Moore-Penrose inverse of $a$ and it will be
denoted by $a^{\dag}$.
\end{df}

\indent In the conditions of  Definition \ref{df2}, recall that according to \cite[Lemma 2.1]{R}, there exists at most one Moore-Penrose
inverse of $a\in A$.  In addition, if $a\in A$ has a Moore-Penrose inverse,
then $(a^{\dag})^{\dag}$ exists. In fact, $(a^{\dag})^{\dag}=a$.
Concerning the properties of the Moore-Penrose inverse 
in the frames
of Banach space operators and Banach algebras, 
see \cite{R, R2, B}, in $C^*$-algebras see \cite{HM, HM2, M}, for the original definition see \cite{Pe}.  \par
 
\indent In order to study the factorization that will be considered in the next section, the notion
of Moore-Penrose inverse for operators defined between different Banach spaces need to be introduced.
First of all, however, some preliminary results will be recalled.

\begin{rema} \label{rem101}\rm Let  $X$ and $Y$ be two Banach spaces and consider $T\in L(X,Y)$ and $S\in L(Y,X)$ 
such that $S$ is a normalized generalized inverse of $T$, i.e.,
$T=TST$ and $S=STS$. Then, it is not difficult to verify  the following facts:\par 
\noindent (i) $ST\in L(X)$ is an idempotent, $N(ST)=N(T)$, $R(ST)=R(S)$ and
$X=N(T)\oplus R(S)$.\par
\noindent (ii) $TS\in L(Y)$ is an idempotent,  $N(TS)=N(S)$, $R(TS)=R(T)$ and $Y=N(S)\oplus R(T)$.\par
\end{rema}

\begin{df}\label{def2.3} Let $X$ and $Y$ be two Banach spaces and consider $T\in L(X,Y)$. The operator $T$ will be
said to be Moore-Penrose invertible, if there exists $S\in L(Y,X)$ such that
$T=TST$, $S=STS$, and $ST\in L(X)$ and $TS\in L(Y)$
are hermitian operators.\end{df}

\indent Before going on some basic results concerning the objects of Definition \ref{def2.3} will be
considered.  \par

\begin{lem}\label{lem2.4} Let $X$ and $Y$ be two Banach spaces and consider $T\in L(X,Y)$ and $S_i\in L(Y,X)$ such that
$T$ and $S_i$  complies the four conditions of Definition  \ref{def2.3}, $i=1,2$. 
Then $S_1=S_2$.
\end{lem}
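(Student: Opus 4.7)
The plan is to mimic the classical uniqueness proof for the Moore-Penrose inverse in a $C^*$-algebra, but replacing ``taking adjoints'' with the structural properties of hermitian elements in a Banach algebra (which are available here since $L(X)$ and $L(Y)$ are unital Banach algebras). Set $p_i=S_iT\in L(X)$ and $q_i=TS_i\in L(Y)$ for $i=1,2$; by Definition \ref{def2.3} each $p_i$ and $q_i$ is hermitian, and the identities $T=TS_iT$, $S_i=S_iTS_i$ force each $p_i$ and $q_i$ to be idempotent.

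The first computational step is to exploit the equations $T=TS_1T=TS_2T$ to obtain the cross identities
\begin{equation*}
p_1p_2=S_1(TS_2T)=S_1T=p_1,\qquad p_2p_1=S_2(TS_1T)=S_2T=p_2,
\end{equation*}
and analogously $q_1q_2=q_2$ and $q_2q_1=q_1$ in $L(Y)$. From here, $h:=p_1-p_2$ lies in the real linear space $\mathcal{H}\subseteq L(X)$ of hermitian operators (as recalled right after Definition \ref{df1}), and a direct expansion using the cross identities gives $h^2=p_1-p_1p_2-p_2p_1+p_2=p_1-p_1-p_2+p_2=0$.

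The crucial step is then to invoke the standard fact that a hermitian element of a unital Banach algebra whose square is zero must itself be zero. This follows from Sinclair's theorem, which identifies the norm of a hermitian element with its spectral radius (see \cite{BD,D,L,V}): if $h\in\mathcal{H}$ and $h^2=0$, then $\sigma(h)=\{0\}$, so $\|h\|=r(h)=0$. Applying this to $p_1-p_2$ yields $S_1T=S_2T$, and the same reasoning applied to $q_1-q_2\in L(Y)$ gives $TS_1=TS_2$.

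Finally, a one-line calculation closes the argument:
\begin{equation*}
S_1=S_1TS_1=S_2TS_1=S_2(TS_1)=S_2(TS_2)=S_2TS_2=S_2.
\end{equation*}
The only non-routine ingredient is the Sinclair-type fact that a hermitian square-zero element vanishes; once this is cited from the hermitian-element literature, everything else is pure algebraic manipulation identical in spirit to the $C^*$-algebra case, with the role of the involution played by the defining property of $\mathcal{H}$.
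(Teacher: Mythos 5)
Your proof is correct and coincides with the paper's intended argument: the paper's proof consists of the single instruction to adapt \cite[Lemma 2.1]{R}, i.e., Rako\v cevi\'c's uniqueness proof for the Moore--Penrose inverse in a Banach algebra, which is precisely your cross-identity computation, the observation that the difference of the two hermitian idempotents is hermitian with square zero (hence zero), and the same closing algebraic manipulation. One side remark: Sinclair's theorem is more than is needed for the key fact, since if $h$ is hermitian and $h^2=0$ then $\exp(ith)=e+ith$ (in $L(X)$, $e$ the identity), so $|t|\,\|h\|-1\le\|\exp(ith)\|=1$ for all $t\in\mathbb{R}$, forcing $h=0$.
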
 
\begin{proof}  Adapt the proof of \cite[Lemma 2.1]{R} to the conditions of the Lemma. 
\end{proof} 

\indent Since according to Lemma \ref{lem2.4} $T\in L(X,Y)$ has at most one Moore-Penrose inverse,
when the Moore-Penrose inverse of $T$ exists, it will be denoted by $T^{\dag}$.  On the other hand,
note that in the next proposition, given $T\in L(X,Y)$, $X$ and $Y$ Banach spaces,
$T^*\in L(Y^*,X^*)$ will denote the adjoint map of $T$ and $X^*$ and $Y^*$ will stand for the
dual space of $X$ and $Y$ respectively. \par

\begin{pro} \label{prop2.5} Let $X$ and $Y$ be two Banach spaces and consider $T\in L(X,Y)$.\par
\noindent \rm{ (i)} \it Necessary and suffcicient for $T^{\dag}$ to exist is the fact that there are
two hermitian idempotents $P\in L(X)$ and $Q\in L(Y)$ such that $N(P)=N(T)$ 
and $R(Q)=R(T)$.\par
\noindent \rm{ (ii)} \it If $T^{\dag}\in L(Y,X)$ exists, then $(T^*)^{\dag}\in L(X^*,Y^*)$ also exists, moreover, 
   $(T^*)^{\dag}= (T^{\dag})^*$.\par
\noindent \rm{ (iii)} \it Suppose that $T^{\dag}\in L(Y,X)$ exists, and let $X'\subseteq X$ and $Y'\subseteq Y$ be two closed vector subspaces 
such that $T(X')\subseteq Y'$ and $T^{\dag}(Y')\subseteq X'$. If $T'=T\mid_{X'}^{Y'}\in L(X',Y')$ and 
$T^{\dag'}=T^{\dag}\mid_{Y'}^{X'}\in L(Y',X')$, then  the Moore-Penrose inverse of $T'$ exists and $(T')^{\dag }=T^{\dag '}$.\par 
\noindent \rm{ (iv)} \it In the conditions of statement \rm{ (iii)}\it, if $\tilde{T}\colon X/X'\to Y/Y'$ 
and $\tilde{T^{\dag}}\colon Y/Y'\to X/X'$ denote the operators induced by $T$ and $T^{\dag}$ respectively,
then  the Moore-Penrose inverse of $\tilde{T}$ exists and $(\tilde{T})^{\dag}=\tilde{T^{\dag}}$.\par
\end{pro}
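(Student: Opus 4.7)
\medskip

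\noindent\textbf{Proof plan.} I would handle the four statements separately, with (i) doing most of the work and (ii)--(iv) being verifications supported by the preservation of the hermitian property under adjoints, restrictions to closed invariant subspaces, and quotients by closed invariant subspaces.

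For (i), the implication $(\Rightarrow)$ is immediate from Remark \ref{rem101}: one sets $P=T^{\dag}T\in L(X)$ and $Q=TT^{\dag}\in L(Y)$, which are hermitian idempotents by Definition \ref{def2.3} and satisfy $N(P)=N(T)$ and $R(Q)=R(T)$. For $(\Leftarrow)$, given the hermitian idempotents $P$ and $Q$, I would use the topological decompositions $X=N(T)\oplus R(P)$ and $Y=N(Q)\oplus R(T)$ afforded by the fact that bounded idempotents split $X$ and $Y$ (note that $R(Q)=R(T)$ forces $R(T)$ to be closed). The restriction $T|_{R(P)}\colon R(P)\to R(T)$ is a continuous bijection between Banach spaces, hence by the open mapping theorem has a bounded inverse; extend this inverse by zero on $N(Q)$ to obtain $S\in L(Y,X)$. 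A direct computation on the two summands of each decomposition then shows $ST=P$ and $TS=Q$, so the four conditions of Definition \ref{def2.3} are met.

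For (ii), I would take adjoints of the four identities characterizing $T^{\dag}$: the relations $T=TST$ and $S=STS$ pass to $T^{\ast}=T^{\ast}S^{\ast}T^{\ast}$ and $S^{\ast}=S^{\ast}T^{\ast}S^{\ast}$, and the hermitian idempotents $ST$ and $TS$ go to the idempotents $T^{\ast}S^{\ast}$ and $S^{\ast}T^{\ast}$. Hermitianity is preserved under taking adjoints because $\|\exp(itT^{\ast})\|=\|(\exp(itT))^{\ast}\|=\|\exp(itT)\|=1$, so Lemma \ref{lem2.4} gives $(T^{\ast})^{\dag}=(T^{\dag})^{\ast}$.

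For (iii), I would define $T'$ and $S'$ as in the statement and verify the four defining conditions by restriction: $T'S'T'=(TST)|_{X'}^{Y'}=T'$ and similarly $S'T'S'=S'$, while $T'S'$ and $S'T'$ are the restrictions of $TT^{\dag}$ and $T^{\dag}T$ to the respective closed invariant subspaces $Y'$ and $X'$. The key auxiliary fact is that a hermitian element restricted to a closed invariant subspace remains hermitian: $\exp(ita|_{V})=\exp(ita)|_{V}$ has norm at most $1$, and its product with $\exp(-ita)|_{V}$ equals $\operatorname{id}_{V}$, forcing norm exactly $1$. Statement (iv) is handled analogously, replacing restrictions by the induced maps on the quotients $X/X'$ and $Y/Y'$; the same two-sided norm inequality for $\exp(it\widetilde{a})=\widetilde{\exp(ita)}$ shows that hermitian elements pass to the quotient.

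The main obstacle is the constructive half of (i): packaging the two hermitian idempotents into a bounded operator $S$ satisfying all four Moore-Penrose axioms, which relies essentially on the open mapping theorem applied to $T|_{R(P)}\colon R(P)\to R(T)$ and a careful bookkeeping on the two direct sum decompositions. The hermitianity transfer lemmas needed for (ii)--(iv) are routine once one recognizes that $\|\exp(it\cdot)\|=1$ behaves well under norm-nonincreasing morphisms that admit a norm-nonincreasing inverse for the exponential, which is exactly what adjoints, restrictions to invariant subspaces, and quotients by invariant subspaces provide.
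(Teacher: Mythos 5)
Your proposal is correct, and it is worth noting that the paper itself offers no argument here at all: its ``proof'' is the single sentence ``Adapt the proofs of \cite[Theorem 6]{B}, \cite[Proposition 7]{B} and \cite[Theorem 10]{B} to the present situation,'' deferring entirely to the author's earlier paper, where these facts are proved for operators on a single Banach space. What you have written is essentially that adaptation carried out in full: statement (i) corresponds to \cite[Theorem 6]{B}, (ii) to \cite[Proposition 7]{B}, and (iii)--(iv) to \cite[Theorem 10]{B}. Your constructive half of (i) is sound: $R(T)=R(Q)$ is closed, $T|_{R(P)}\colon R(P)\to R(T)$ is a continuous bijection (injective since $N(T)\cap R(P)=N(P)\cap R(P)=0$, surjective since $X=N(T)\oplus R(P)$), the open mapping theorem bounds its inverse, and extending by zero along $Y=N(Q)\oplus R(T)$ yields $S$ with $ST=P$, $TS=Q$, whence all four Moore--Penrose conditions hold and uniqueness (Lemma \ref{lem2.4}) closes the argument. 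Two details deserve emphasis, and you handle both correctly: in (ii) the Banach-space adjoint is \emph{linear} in the operator (unlike the Hilbert-space involution), so $(\exp(itA))^{*}=\exp(itA^{*})$ and hermitianity transfers with equal norms; and in (iii)--(iv) the inequality $\|\exp(ita)|_{V}\|\le 1$ alone is not enough --- your two-sided trick, multiplying by $\exp(-ita)|_{V}$ to force the norm up to $1$, is exactly the point that makes the restriction and quotient lemmas work. The only gain of the paper's route is brevity; yours buys a self-contained proof that makes visible precisely which properties of hermitian operators (stability under adjoints, invariant restrictions, and quotients) the proposition rests on.
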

\begin{proof} Adapt  the proofs of \cite[Theorem 6]{B}, \cite[Proposition 7]{B} and \cite[Theorem 10]{B}
to the present situation.
\end{proof}

\indent The final point of this section concerns  the definition of EP Banach algebra elements.\par

\begin{df}\label{del2.6} Let $A$ be a unital Banach algebra. Given $a\in A$, $a$ will be said to be EP, if $a^{\dag}$ exists
and  $aa^{\dag}=a^{\dag}a$.
\end{df}
\indent Properties, characterizations and other facts regarding EP Banach space operators and EP Banach algebra elements
were studied in \cite{B}. In the following remark some of the most relevant results on these objects will be recalled.\par

 \begin{rema}\label{rema102}\rm Let $A$ be a unital Banach algebra and consider $a\in A$. \par
\noindent (i) Note that $a\in A$
is EP if and only if $a^{\dag}$ is EP. \par
\noindent (ii) According to \cite[Remark 12]{B}, necessary and
sufficient for $a\in A$ to be EP is the fact that $L_a\in L(A)$ is EP. \par
\noindent (iii) Let $A=L(X)$, $X$ a Banach space, and considet $T\in L(X)$. Then, according to \cite[Theorem 16]{B},
$T$ is EP if and only if $R(T)=R(T^{\dag})$ or $N(T)=N(T^{\dag})$.
\end{rema}

\section{\sfstp Factorization of the form $a=bc$}
\
\indent In this section, given a unital Banach algebra $A$, EP elements of 
the form $a=bc$ will be characterized, where $a,b,c\in A$, $a$ is Moore-Penrose invertible, $b^{-1}(0)=0$ and $cA=A$.
Concerning this kind of factorization, see the introductory section of \cite{DKP}.
In addition, compare the results of this section with \cite[section 5]{DKP} and \cite[sections 1.3, 2.3]{DKS}. However,
to prove the main results of this section, some preliminary facts
must be considered. Note that in what follows the identity map on the Banach space $X$ (respectively $Y$)
will be denoted by $I\in L(X)$ (respectively $I'\in L(Y)$).\par

\begin{pro}\label{pro1}Let $X$ and $Y$ be two Banach spaces and consider $T\in L(X)$, $C\in L(X,Y)$ and $B\in L(Y,X)$ such that $C$ is surjective, $B$ is injective
and $T=BC$. Suppose, in addition, that $T^{\dag}$ exists. Then, the following statements hold.\par
\noindent \rm (i) \it There exists $B^{\dag}\in L(X,Y)$ such that  $B^{\dag}B=I'$,\par
\noindent  \rm (ii) \it there exists $C^{\dag}\in L(Y,X)$ such that $CC^{\dag}=I'$,\par
\noindent \rm (iii) $T^{\dag}=C^{\dag}B^{\dag}$, $TT^{\dag}=BB^{\dag}$, $T^{\dag}T=C^{\dag}C$, $B^{\dag}=CT^{\dag}$ and $C^{\dag}= T^{\dag}B$.\par
\end{pro}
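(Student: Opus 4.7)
The plan is to proceed in three stages: first, use the factorization together with the hypotheses on $B$ and $C$ to identify $N(T)$ and $R(T)$; second, invoke Proposition \ref{prop2.5}(i) to produce $B^{\dag}$ and $C^{\dag}$; third, verify the Moore-Penrose axioms for $C^{\dag}B^{\dag}$ and deduce the remaining stated identities.

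For the first stage, I would note that injectivity of $B$ gives $N(T) = N(BC) = N(C)$, and surjectivity of $C$ gives $R(T) = R(BC) = R(B)$. These equalities are exactly what allows one to transfer the Moore-Penrose data of $T$ to the factors. For the second stage, applying Proposition \ref{prop2.5}(i) to $B \in L(Y,X)$ requires a hermitian idempotent in $L(Y)$ with kernel $N(B)=\{0\}$ and a hermitian idempotent in $L(X)$ with range $R(B) = R(T)$. The former may be taken to be $I'$, which is trivially hermitian, and the latter can be taken to be $TT^{\dag}$, which is hermitian by Definition \ref{def2.3}. Symmetrically, for $C \in L(X,Y)$ one uses $T^{\dag}T$ (kernel $N(T)=N(C)$) and $I'$ (range $Y = R(C)$). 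This yields the existence of $B^{\dag}$ and $C^{\dag}$.

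Parts (i) and (ii) then follow from Remark \ref{rem101}: $B^{\dag}B \in L(Y)$ is an idempotent with $N(B^{\dag}B) = N(B) = \{0\}$, and $CC^{\dag} \in L(Y)$ is an idempotent with $R(CC^{\dag}) = R(C) = Y$. An injective idempotent is necessarily the identity (if $P^2=P$ and $N(P)=0$, then $P(v-Pv)=0$ forces $v=Pv$), and by a dual argument so is a surjective one, hence $B^{\dag}B = CC^{\dag} = I'$.

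For part (iii), I would verify directly that $S := C^{\dag}B^{\dag}$ is the Moore-Penrose inverse of $T$. Using (i) and (ii): $TST = B(CC^{\dag})(B^{\dag}B)C = BC = T$; $STS = C^{\dag}(B^{\dag}B)(CC^{\dag})B^{\dag} = C^{\dag}B^{\dag} = S$; $TS = B(CC^{\dag})B^{\dag} = BB^{\dag}$, hermitian since $B^{\dag}$ is the Moore-Penrose inverse of $B$; and $ST = C^{\dag}(B^{\dag}B)C = C^{\dag}C$, hermitian by the same reason for $C$. Uniqueness (Lemma \ref{lem2.4}) then gives $T^{\dag} = C^{\dag}B^{\dag}$, and the remaining identities $TT^{\dag}=BB^{\dag}$, $T^{\dag}T=C^{\dag}C$, $CT^{\dag}=CC^{\dag}B^{\dag}=B^{\dag}$, and $T^{\dag}B=C^{\dag}B^{\dag}B=C^{\dag}$ fall out by substitution. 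The only genuinely non-mechanical step is choosing the right auxiliary hermitian idempotents to invoke Proposition \ref{prop2.5}(i); once $I'$ and the hermitian factors $TT^{\dag},T^{\dag}T$ are in hand, the rest is routine algebra modulo the uniqueness of the Moore-Penrose inverse.
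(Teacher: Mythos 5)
Your proof is correct and follows essentially the same route as the paper's: existence of $B^{\dag}$ and $C^{\dag}$ via Proposition \ref{prop2.5}(i), the normalizations $B^{\dag}B=I'=CC^{\dag}$, and then direct verification of the Moore--Penrose axioms for $S=C^{\dag}B^{\dag}$ followed by uniqueness. The only (harmless, in fact slightly cleaner) deviations are that you take $TT^{\dag}$ and $T^{\dag}T$ themselves as the auxiliary hermitian idempotents where the paper invokes \cite[Theorem 6(ii)]{B}, and you obtain $CC^{\dag}=I'$ from the elementary fact that a surjective idempotent is the identity where the paper cites \cite[Theorem 2.2]{P}.
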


\begin{proof}  According to \cite[Theorem 6(ii)]{B},
there exist two hermitian idempotents $P$ and $Q\in L(X)$ such that $N(P)=N(T)$ and $R(Q)=R(T)$.
Since $N(B)=0$ and $R(B)=R(T)$, according to Proposition \ref{prop2.5}(i),
$B^{\dag}\in L(X,Y)$ exists. Actually, $I'\in L(Y)$ and $Q\in L(X)$ are two hermitian
idempotents such that $N(B)= N(I')$ and $R(B)=R(Q)$. 
Furthermore, according to Remark \ref{rem101},
$R(I'-B^{\dag}B)=N(B^{\dag}B)=N(B)=0$, i. e., $B^{\dag}B=I'$.\par

\indent Similarly, since $R(C)=Y$ and $N(C)=N(T)$, according to Proposition \ref{prop2.5}(i),
$C^{\dag}\in L(Y,X)$ exists. In fact, $I'\in L(Y)$ and $P\in L(X)$ are two hermitian
idempotents such that $R(C)=R(I')$ and $N(C)=N(P)$. Moreover, note that 
according to Remark \ref{rem101}, $R(CC^{\dag})=R(C)$.
However, since $CC^{\dag}$
and $I'$ are hermitian idempotents of $L(Y)$ whose ranges coincide, according to 
\cite[Theorem 2.2]{P}, $CC^{\dag}=I'$.\par

\indent Next consider $S=C^{\dag}B^{\dag}\in L(X)$. A straightforward calculation proves that   
$T=TST$, $S=STS$, $TS=BB^{\dag}$ and $ST=C^{\dag}C$. However, since
$BB^{\dag}$ and $C^{\dag}C$  are two hermitian idempotents, according to \cite[Lemma 2.1]{R},
$S=T^{\dag}$. The remaining two identities can be derived from statements (i) and (ii).
 \end{proof}
In the following theorems
Moore-Penrose invertible operators of the form $T=BC$ will be characterized.\par
 
\begin{thm}\label{thm3.2} In the conditions of Proposition \ref{pro1}, the following statements are equivalent.\par
\noindent \rm (i) \it $T$ is an EP operator,\par
\noindent \rm (ii) \it $BB^{\dag}=C^{\dag}C$,\par
\noindent \rm (iii) \it $N(B^{\dag})=N(C)$,\par
\noindent \rm (iv) \it $R(B)=R(C^{\dag})$.\par
\end{thm}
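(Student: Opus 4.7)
My plan is to use Proposition \ref{pro1}(iii) as the main engine. Since $TT^{\dag}=BB^{\dag}$ and $T^{\dag}T=C^{\dag}C$, the equivalence (i)$\Leftrightarrow$(ii) is essentially free from Definition \ref{del2.6}: $T$ is EP if and only if $TT^{\dag}=T^{\dag}T$, which becomes $BB^{\dag}=C^{\dag}C$ after substituting the identities from Proposition \ref{pro1}.

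For the remaining equivalences, the idea is to recognize that $BB^{\dag}$ and $C^{\dag}C$ are hermitian idempotents (they are the ``$TS$'' and ``$ST$'' operators associated with the Moore-Penrose inverses of $B$ and $C$ respectively), so they are determined by either their range or their null space via hermitian idempotent rigidity. I would first compute their ranges and null spaces using the one-sided inverse identities $B^{\dag}B=I'$ and $CC^{\dag}=I'$ from Proposition \ref{pro1}(i),(ii): a short calculation shows $R(BB^{\dag})=R(B)$ and $N(BB^{\dag})=N(B^{\dag})$ (the latter using that $B$ is injective), while $R(C^{\dag}C)=R(C^{\dag})$ and $N(C^{\dag}C)=N(C)$ (the latter using $CC^{\dag}=I'$). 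With these identifications, (ii)$\Rightarrow$(iii) and (ii)$\Rightarrow$(iv) are obtained by taking null spaces and ranges respectively.

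For the reverse implications (iii)$\Rightarrow$(ii) and (iv)$\Rightarrow$(ii), the tool is \cite[Theorem 2.2]{P} already invoked in the proof of Proposition \ref{pro1}: two hermitian idempotents with the same range are equal. To handle (iii) I would pass to the complementary idempotents $I-BB^{\dag}$ and $I-C^{\dag}C$, which remain hermitian since $\mathcal{H}$ is a real vector subspace of $L(X)$ containing the identity, and whose ranges are precisely the null spaces $N(B^{\dag})$ and $N(C)$. Then the hypothesis $N(B^{\dag})=N(C)$ together with \cite[Theorem 2.2]{P} forces $I-BB^{\dag}=I-C^{\dag}C$, hence (ii). The implication (iv)$\Rightarrow$(ii) is a direct application of \cite[Theorem 2.2]{P} to $BB^{\dag}$ and $C^{\dag}C$ themselves, once one has identified their ranges with $R(B)$ and $R(C^{\dag})$.

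The main obstacle is the dual form of \cite[Theorem 2.2]{P} needed for (iii)$\Rightarrow$(ii): two hermitian idempotents with equal null spaces are equal. The plan is to sidestep this by the complementation trick above, which is legitimate precisely because $\mathcal{H}$ is closed under subtraction and contains $I$, as recalled just after Definition \ref{df1}. Apart from this, everything reduces to the bookkeeping already built into Proposition \ref{pro1}.
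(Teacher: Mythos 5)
Your proposal is correct, but for the equivalences involving (iii) and (iv) it takes a genuinely different route from the paper. The treatment of (i)$\Leftrightarrow$(ii) coincides with the paper's: both substitute the identities $TT^{\dag}=BB^{\dag}$ and $T^{\dag}T=C^{\dag}C$ from Proposition \ref{pro1}(iii) into the definition of EP. For (iii) and (iv), however, the paper never argues at the level of the idempotents $BB^{\dag}$ and $C^{\dag}C$: it identifies $N(T)=N(C)$, $N(T^{\dag})=N(B^{\dag})$, $R(T)=R(B)$, $R(T^{\dag})=R(C^{\dag})$, and then invokes the external characterization \cite[Theorem 16]{B} (recalled in Remark \ref{rema102}(iii)), namely that $T$ is EP if and only if $N(T)=N(T^{\dag})$ or $R(T)=R(T^{\dag})$, to link (iii) and (iv) directly to (i). You instead link (iii) and (iv) to (ii) by rigidity of hermitian idempotents: \cite[Theorem 2.2]{P} (two hermitian idempotents with equal ranges coincide) handles (iv)$\Rightarrow$(ii) directly, and your complementation trick --- passing to $I-BB^{\dag}$ and $I-C^{\dag}C$, which remain hermitian idempotents because $\mathcal{H}$ is a real subspace containing $I$, and whose ranges are exactly $N(B^{\dag})$ and $N(C)$ --- handles (iii)$\Rightarrow$(ii) without needing a ``dual'' (null-space) version of Palmer's theorem. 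Both arguments are sound; the paper's is shorter because it leans on the already-established machinery of \cite[Theorem 16]{B}, while yours is more self-contained, using only Remark \ref{rem101}, Proposition \ref{pro1}, and the same rigidity lemma \cite[Theorem 2.2]{P} that the paper already employed in proving Proposition \ref{pro1}; in effect you re-derive the relevant instance of \cite[Theorem 16]{B} rather than citing it, and the complementation device is a genuinely useful observation that the paper does not make explicit.
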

\begin{proof} According to Proposition \ref{pro1}(iii),  
statements (i) and (ii) are equivalent. In addition, since $N(T)=N(C)$, $N(T^{\dag})= N(B^{\dag})$, $R(T)=R(B)$ and $R(T^{\dag})=R(C^{\dag})$, according to \cite[Theorem 16]{B},
$T$ is EP if and only if statements (iii)-(iv)   holds.
\end{proof}

\begin{rema}\label{rem3.3}\rm Note that when $H$ is a Hilbert space and $T\in L(H)$, in the conditions of \cite[Theorem 5.1]{DKP}, necessary and 
sufficient for $T$ to be EP is the fact that $R(B)= R(C^*)$. In fact, apply Theorem \ref{thm3.2} and use that $R(C^*)=R(C^{\dag})$,
see section 2 of \cite{DKP}.
\end{rema}

\begin{thm} \label{thm3.4}  In the conditions of Proposition \ref{pro1}, necessary and sufficient for 
$T$ to be EP is that one the following statements holds.\par
\noindent  \rm (i) \it $(I-C^{\dag}C)B=0$ and $C(I-BB^{\dag})=0$, \par
\noindent \rm (ii) \it $B^{\dag}(I-C^{\dag}C)=0$ and $C(I-BB^{\dag})=0$, \par
\noindent \rm (iii) \it $(I-C^{\dag}C)B=0$ and $(I-BB^{\dag})C^{\dag}=0$,\par
\noindent \rm (iv) \it $B^{\dag}(I-C^{\dag}C)=0$ and $(I-BB^{\dag})C^{\dag}=0$,\par
\noindent  \rm (v) \it $C(I-BB^{\dag})=0$ and $B^{\dag}(I-C^{\dag}C)B=0$,\par
\noindent \rm (vi) \it $B^{\dag}(I-C^{\dag}C)=0$ and $C(I-BB^{\dag})C^{\dag}=0$.
\end{thm}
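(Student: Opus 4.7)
The plan is to reduce Theorem \ref{thm3.4} to Theorem \ref{thm3.2}, which characterizes the EP property of $T$ by the single identity $BB^{\dag}=C^{\dag}C$. Write $p:=BB^{\dag}$ and $q:=C^{\dag}C$, both hermitian idempotents of $L(X)$. I shall show that each of (i)--(vi) is equivalent to $p=q$, relying throughout on the identities $B^{\dag}B=I'$ and $CC^{\dag}=I'$ from Proposition \ref{pro1}.

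The implication $p=q\Rightarrow$ (i)--(vi) is routine: substituting $p$ for $q$ (or vice versa) in each hypothesis and collapsing via $B^{\dag}B=I'$, $CC^{\dag}=I'$, or the generalized-inverse relations $B=BB^{\dag}B$ and $C=CC^{\dag}C$, reduces every left-hand side to $0$. No ideas are required here.

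For the converse, the first step is always to transform each hypothesis into an equation between $p$ and $q$ alone, by pre- or post-multiplying by $B$, $B^{\dag}$, $C$ or $C^{\dag}$ chosen so that $B^{\dag}B=I'$ or $CC^{\dag}=I'$ collapses the outer factors. In case (i) this yields $p=qp$ from the first hypothesis and $q=qp$ from the second, hence $p=qp=q$ immediately; case (iv) is analogous and gives $p=pq=q$. In case (ii) the same procedure instead produces the pair $p=pq$ and $q=qp$, and in case (iii) the pair $p=qp$ and $q=pq$; here $p=q$ does not follow by a direct substitution, but $p,q$ (case (iii)) or $I-p,I-q$ (case (ii)) turn out to be hermitian idempotents with coincident range, so \cite[Theorem 2.2]{P} applies exactly as in the proof of Proposition \ref{pro1} and forces $p=q$.

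Cases (v) and (vi) need a preparatory substitution before the argument above applies: the hypotheses $B^{\dag}(I-C^{\dag}C)B=0$ and $C(I-BB^{\dag})C^{\dag}=0$ translate into $B^{\dag}C^{\dag}CB=I'$ and $CBB^{\dag}C^{\dag}=I'$, and sandwiching these between $B,B^{\dag}$ or $C^{\dag},C$ respectively yields $p=pqp$ and $q=qpq$. Combining with the simpler hypothesis of the pair (which gives $q=qp$ in case (v) and $p=pq$ in case (vi)) recovers the pair $p=pq$, $q=qp$ of case (ii), and the hermitian idempotent uniqueness then finishes. The main obstacle is precisely identifying the correct chain of one-sided multiplications in cases (v) and (vi); once the pair $p=pq$, $q=qp$ is reached, the argument closes uniformly via \cite[Theorem 2.2]{P}.
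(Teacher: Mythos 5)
Your proof is correct, and it takes a genuinely different route from the paper's. You reduce all six statements to multiplicative identities between the two hermitian idempotents $p=BB^{\dag}$ and $q=C^{\dag}C$: cases (i) and (iv) close by pure algebra ($p=qp=q$, respectively $p=pq=q$), while cases (ii) and (iii) --- to which you correctly reduce (v) and (vi) via the sandwich identities $p=pqp$, $q=qpq$ --- are settled by \cite[Theorem 2.2]{P}, i.e.\ uniqueness of a hermitian idempotent with prescribed range, applied to $p,q$ in case (iii) and to $I-p$, $I-q$ in case (ii), whose common range is $N(p)=N(q)$. The paper argues geometrically instead: for (i)--(iv) it converts each hypothesis into the inclusions $R(T)\subseteq R(T^{\dag})$ and $N(T^{\dag})\subseteq N(T)$ and upgrades them to equalities using the decompositions $N(T^{\dag})\oplus R(T)=X=N(T)\oplus R(T^{\dag})$ of Remark \ref{rem101}, concluding by \cite[Theorem 16]{B}; for (v)--(vi) it obtains the extra hypothesis from the EP identity $T^{\dag}T=T^{\dag}T^{\dag}TT$ in one direction and, conversely, uses the decomposition $X=N(B^{\dag})\oplus R(B)$ pointwise to promote $B^{\dag}(I-C^{\dag}C)B=0$ to $B^{\dag}(I-C^{\dag}C)=0$. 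Note that Palmer's theorem is precisely the tool the paper already uses in Proposition \ref{pro1} (to force $CC^{\dag}=I'$), so you stay within the paper's toolkit; what your version buys is uniformity (all six cases funnel through one idempotent calculus) and a purely multiplicative argument that transfers verbatim, element by element, to the Banach algebra setting of Theorem \ref{thm3.7}, whereas the paper's version keeps the range/kernel picture in view and thus connects directly with the characterization \cite[Theorem 16]{B} of EP operators.
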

\begin{proof} Suppose that $T$ is EP. Then, according to Theorem \ref{thm3.2}, $R(B)=R(C^{\dag})$ and
$N(B^{\dag})=N(C)$. However, since $N(I-C^{\dag}C)=R(C^{\dag}C)=R(C^{\dag})$
and $R(I-BB^{\dag})=N(BB^{\dag})=N(B^{\dag})$ (Remark \ref{rem101}), statement (i) holds.\par

\indent On the other hand, note that the conditions in statement (i) are equivalent to
$R(T)=R(B)\subseteq N(I-C^{\dag}C)=R(C^{\dag})=R(T^{\dag})$ and
$N(T^{\dag})=N(B^{\dag})=R(I-BB^{\dag})\subseteq N(C)=N(T)$.
However, since according to Remark \ref{rem101} $N(T^{\dag})\oplus R(T)=X=N(T)\oplus R(T^{\dag})$,
it is not difficult to prove that $N(T)=N(T^{\dag})$ and $R(T)=R(T^{\dag})$. In particular, according to
\cite[Theorem 16]{B},  $T$ is  EP.\par 
\indent The equivalence among the condition of being EP and statements (ii)-(iv) can be proved using similar arguments.\par

\indent Next consider statement (v).  According to what has been proved, if $T$ is EP, then  $C(I-BB^{\dag})=0$.
In addition, if $T$ is EP, then $T^{\dag}T=T^{\dag}T^{\dag}TT$, equivalently, 
$C^{\dag}C=C^{\dag}B^{\dag}C^{\dag}CBC$. Now well, according to Proposition \ref{pro1}(i)-(ii),
$B^{\dag}B=I'=B^{\dag}C^{\dag}CB$, which is equivalent to $B^{\dag}(I-C^{\dag}C)B=0$.\par

\indent On the other hand, if statement (v) holds, then according to what has been proved,
$N(B^{\dag})\subseteq N(C)$. However, since $X= N(B^{\dag})\oplus R(B)$ (Remark \ref{rem101}), 
if in addition $B^{\dag}(I-C^{\dag}C)B=0$, then a straightforward calculation  proves that $B^{\dag}(I-C^{\dag}C)=0$. 
Since $R(I-C^{\dag}C) =N(C)$ (Remark \ref{rem101}), $N(C)\subseteq N(B^{\dag})$.
Therefore, according to Theorem \ref{thm3.2}(ii), $T$ is EP.\par
\indent The equivalennce between the condition of being EP and statement (vi) can be proved 
in a similar way.
\end{proof}

\begin{thm}\label{thm3.5}  In the conditions of Proposition \ref{pro1},  the following statements are equivalent.\par
\noindent \rm (i) \it $T$ is an EP operator,\par
\noindent \rm (ii) \it there exists an isomorphism $U\in L(Y)$ such that $C=UB^{\dag}$,\par
\noindent \rm (iii) \it there exists an injective map $U_1\in L(Y)$ such that $C=U_1B^{\dag}$,\par
\noindent \rm (iv) \it there exist $U_2, U_3\in L(Y)$ such that $C=U_2B^{\dag}$ and $B^{\dag}=U_3C$,\par 
\noindent \rm (v) \it there exists an isomorphism $W\in L(Y)$ such that $B=C^{\dag}W$,\par
\noindent \rm (vi) \it there exists a surjective map $W_1\in L(Y)$ such that $B=C^{\dag}W_1$,\par
\noindent \rm (vii) \it there exist $W_2, W_3\in L(Y)$ such that $B=C^{\dag}W_2$ and $C^{\dag}=BW_3$,\par
\noindent \rm (viii) \it there exist $H_1, H_2\in L(Y)$ such that $B=C^{\dag}H_2$ and $C=H_1B^{\dag}$,\par
\noindent \rm (ix) \it there exist $ K_1, K_2\in L(Y)$ such that $C^{\dag}=BK_2$ and $B^{\dag}=K_1C$,\par
\noindent \rm (x) \it there exists an injective map $S_1\in L(Y)$ such that $B^{\dag}=S_1C$,\par
\noindent \rm (xi) \it there exists a surjective map $S_2\in L(Y)$ such that $C^{\dag}=BS_2$.\par
\end{thm}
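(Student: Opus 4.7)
The plan is to treat condition (i) as a hub and prove each of (ii)--(xi) equivalent to it, relying on the characterizations from Theorem \ref{thm3.2} and the identities of Proposition \ref{pro1}. The central observation I would establish first is that when $T$ is EP, the operator $CB\in L(Y)$ is an isomorphism with inverse $B^{\dag}C^{\dag}$: the equality $BB^{\dag}=C^{\dag}C$ from Theorem \ref{thm3.2}, combined with $B^{\dag}B=I'=CC^{\dag}$, gives $(CB)(B^{\dag}C^{\dag})=CC^{\dag}CC^{\dag}=I'$ and $(B^{\dag}C^{\dag})(CB)=B^{\dag}BB^{\dag}B=I'$. From the same identities one reads off $C=(CB)B^{\dag}$, $B=C^{\dag}(CB)$, $C^{\dag}=B(CB)^{-1}$ and $B^{\dag}=(CB)^{-1}C$, which are precisely the decompositions appearing in (ii), (v), (vii), (ix), (x), (xi).

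For the forward implications (i)$\Rightarrow$(ii)--(xi), I would simply exhibit the operators prescribed by the above list: $U=W=CB$ (an isomorphism); $U_1=U_2=W_1=W_2=H_1=H_2=CB$; and $U_3=W_3=K_1=K_2=S_1=S_2=B^{\dag}C^{\dag}=(CB)^{-1}$. Each required equation is then a one-line verification using $BB^{\dag}=C^{\dag}C$, $B^{\dag}B=I'$ and $CC^{\dag}=I'$, while the injectivity/surjectivity demanded by (iii), (vi), (x), (xi) follows from the bijectivity of $CB$.

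For the converse implications the unifying strategy is to extract either $R(B)=R(C^{\dag})$ or $N(B^{\dag})=N(C)$ and then invoke Theorem \ref{thm3.2}. Statements (iii) and (x), which impose injectivity on $U_1$ and $S_1$ respectively, immediately give $N(C)=N(B^{\dag})$; statements (vi) and (xi), with surjectivity on $W_1$ and $S_2$, give $R(B)=R(C^{\dag})$. The pairs of identities in (iv) and (vii) each produce both inclusions simultaneously. For (viii), composing $B=C^{\dag}H_2$ on the left with $C$ and $C=H_1B^{\dag}$ on the right with $B$ forces $H_1=H_2=CB$, yielding $(I-C^{\dag}C)B=0$ and $C(I-BB^{\dag})=0$, so Theorem \ref{thm3.4}(i) applies. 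For (ix), right multiplying $B^{\dag}=K_1C$ by $B$ and left multiplying $C^{\dag}=BK_2$ by $C$ show that $K_1$ and $K_2$ are respectively a left and a right inverse of $CB$, hence are equal and equal to $(CB)^{-1}$; then $C^{\dag}C=B(CB)^{-1}C=BB^{\dag}$.

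The main obstacle I anticipate is not any single deep step but organizing the bookkeeping across eleven conditions: keeping track of which identities yield kernel inclusions, which yield range inclusions, and which injectivity/surjectivity hypotheses are needed to upgrade an inclusion to an equality. Once the fact that $CB$ is the ``universal'' isomorphism in the EP case is in hand, every equivalence reduces to a short application of Theorem \ref{thm3.2} (together with Theorem \ref{thm3.4} for (viii)).
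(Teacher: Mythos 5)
Your proposal is correct, and its skeleton coincides with the paper's: condition (i) is the hub, the operator $CB$ with inverse $B^{\dag}C^{\dag}$ is the universal isomorphism witnessing (ii) and (v) (the paper introduces it as $U=CTC^{\dag}=CB$, $Z=B^{\dag}C^{\dag}$, verifying $UZ=ZU=I'$ via the identities of Theorem \ref{thm3.4}(i), where you verify the same facts via $BB^{\dag}=C^{\dag}C$ from Theorem \ref{thm3.2}(ii) together with $B^{\dag}B=I'=CC^{\dag}$), and the converses for (iii), (iv), (vi), (vii), (x), (xi) are read off as kernel or range equalities feeding into Theorem \ref{thm3.2}. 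The genuine divergence is in the converses of (viii) and (ix), the two conditions with no injectivity or surjectivity hypothesis. The paper handles them geometrically: the hypotheses give only the inclusions $R(B)\subseteq R(C^{\dag})$, $N(B^{\dag})\subseteq N(C)$ (respectively the reverse ones), and these are upgraded to equalities using the two direct-sum decompositions $X=N(B^{\dag})\oplus R(B)=N(C)\oplus R(C^{\dag})$. You instead argue algebraically: for (viii) you left-compose with $C$ and right-compose with $B$ to force $H_1=H_2=CB$, landing exactly on the two identities of Theorem \ref{thm3.4}(i); for (ix) you observe that $K_1$ and $K_2$ are a left and a right inverse of $CB$, hence $CB$ is invertible and $C^{\dag}C=B(CB)^{-1}C=BB^{\dag}$, which is Theorem \ref{thm3.2}(ii). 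Your route buys a purely computational proof that never touches the subspace decompositions (and, as a bonus, shows that $CB$ is automatically invertible already under hypotheses (viii) or (ix)); the paper's route buys uniformity, since the same inclusion-plus-decomposition argument disposes of (viii) and (ix) simultaneously and reuses a technique already employed in the proof of Theorem \ref{thm3.4}. Both are complete; all the one-line verifications you defer to the reader do check out against $B^{\dag}B=I'$, $CC^{\dag}=I'$ and $BB^{\dag}=C^{\dag}C$.
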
 

\begin{proof} Suppose that $T$ is EP and define $U=CTC^{\dag}\in L(Y)$. According to Proposition \ref{pro1}(ii)
and the second identity of Theorem \ref{thm3.4}(i), $U=CB$ and $UB^{\dag }=C$. To prove that $U\in L(Y)$ is an isomorphic map,
consider $Z= B^{\dag }C^{\dag }\in L(Y)$. According to the second identity of Theorem \ref{thm3.4}(i)
and Proposition \ref{pro1}(ii), $UZ=I'$. In addition, according to the first identity of Theorem \ref{thm3.4}(i) and
Proposition \ref{pro1}(i), $ZU=I'$.\par

\indent It is clear that statement (ii) implies statement (iii). On the other hand, if statement (iii) holds,
then $N(C)=N(B^{\dag})$, which, according to Theorem \ref{thm3.2}(iii), is equivalent to statement (i).\par

\indent Clearly,  statement (ii) implies statement (iv), which in turn implies that $N(C)=N(B^{\dag})$.
Consequently, according to Theorem \ref{thm3.2}(iii), $T$ is an EP operator.\par

\indent On the other hand, if $T$ is EP, then, according to the first identity of Theorem \ref{thm3.4}(i),
$C^{\dag }U=C^{\dag }CB=B$. As a result, statement (v) holds with $W=U$.\par

\indent Statement (v) implies statement (vi), which in turn implies that $R(B)=R(C^{\dag})$.
However, according to Theorem \ref{thm3.2}(iv), $T$ is EP.\par

\indent In addition,  statement (v) implies statement (vii), which in turn implies  that $R(B)=R(C^{\dag})$.
In particular, according to Theorem \ref{thm3.2}(iv), $T$ is EP.\par

\indent It is clear that  statements (iv) and (vii) implies statements (viii) and (ix). To prove that
both statement (viii) and statement (ix) implies that $T$ is EP, consider the decomposition of $X$ defined by
$TT^{\dag}=BB^{\dag}$ and $T^{\dag}T=C^{\dag}C$, i.e., $X=N(B^{\dag})\oplus R(B)= N(C)\oplus R(C^{\dag})$.
 Now well, if statement (viii) (respectively statement (ix)) holds,
then  $N(B^{\dag})\subseteq N(C)$ and $R(B)\subseteq R(C^{\dag})$
(respectively $N(C)\subseteq  N(B^{\dag})$ and $R(C^{\dag})\subseteq R(B)$).
However, according the decompositions of $X$, 
if statement (viii) or statement (ix) holds, then
it is not difficult to prove that $N(B^{\dag})= N(C)$ and $R(B)= R(C^{\dag})$.
Therefore, according to Theorem \ref{thm3.2},
$T$ is EP.\par
\indent Next, statement (ii) implies statement (x). On the other hand, if statement (x) holds, since $B^{\dag}=S_1C$,
$Y=R(B^{\dag})\subseteq R(S_1)$. However, since $N(S_1)=0$, statement (ii) holds.\par
\indent Similarly, statement (v) implies statement (xi). To prove the converse,
if $C^{\dag}=BS_2$, then, according to Proposition \ref{pro1}, $N(S_2)=0$. Since $S_2$ is surjective, statement (v) holds.
\end{proof}

\indent Next the Banach algebra case will be studied. Firstly some preliminary
facts will be considered.\par

\begin{pro}\label{prop3.6} Let $A$ be a unital Banach algebra and consider $a$, $b$, $c\in A$
such that $a^{\dag}$ exists, $a=bc$, $b^{-1}(0)=0$ and $cA=A$. Then, the following statements hold.\par  
\noindent \rm (i) \it There exists $b^{\dag}\in A$ such that $b^{\dag}b=e$,\par
\noindent \rm (ii) \it there exists $c^{\dag}\in A$ and $cc^{\dag}=e$,\par
\noindent \rm (iii) $a^{\dag}=c^{\dag}b^{\dag}$, $aa^{\dag}=bb^{\dag}$,
$a^{\dag}a=c^{\dag}c$, $b^{\dag}=ca^{\dag}$ and $c^{\dag}= a^{\dag}b$.
\end{pro}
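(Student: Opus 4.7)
This proposition is the Banach algebra analogue of Proposition \ref{pro1}, and the cleanest approach is to mimic its proof directly in $A$. Guided by the last two formulas of Proposition \ref{pro1}(iii), I would take as candidates
\[
b^{\dag} := c a^{\dag}, \qquad c^{\dag} := a^{\dag} b,
\]
and verify by direct computation that these are Moore-Penrose inverses of $b$ and $c$ in the sense of Definition \ref{df2}, satisfy $b^{\dag}b = e$ and $cc^{\dag} = e$, and yield the five identities of (iii).

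The one nontrivial preliminary is the identity $aa^{\dag}b = b$. It holds because $cA = A$ gives $aA = bcA = bA$, and because $aa^{\dag}$ acts as the identity on $aA$: from $aa^{\dag}a = a$ one obtains $aa^{\dag}\cdot ax = ax$ for every $x\in A$, so $aa^{\dag}$ fixes every element of $aA$, and in particular $b \in bA = aA$. With this in hand, $b(ca^{\dag}b) = aa^{\dag}b = b = be$, and the hypothesis $b^{-1}(0)=0$ (injectivity of left multiplication by $b$) forces $ca^{\dag}b = e$. The same product equals both $b^{\dag}b$ and $cc^{\dag}$, so (i) and (ii) are established simultaneously; as a by-product $ca^{\dag}a = ca^{\dag}(bc) = (ca^{\dag}b)c = c$, the dual of the key identity. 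The relations in (iii) then reduce to one-line substitutions using $a = bc$: $c^{\dag}b^{\dag} = a^{\dag}(bc)a^{\dag} = a^{\dag}$, $bb^{\dag} = (bc)a^{\dag} = aa^{\dag}$, and $c^{\dag}c = a^{\dag}(bc) = a^{\dag}a$.

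It remains to confirm that $ca^{\dag}$ and $a^{\dag}b$ are genuine Moore-Penrose inverses. The algebraic relations $bb^{\dag}b = aa^{\dag}b = b$ and $b^{\dag}bb^{\dag} = eb^{\dag} = b^{\dag}$ follow from what has just been shown; for the hermitian requirements, $bb^{\dag} = aa^{\dag}$ and $c^{\dag}c = a^{\dag}a$ are hermitian by the very definition of $a^{\dag}$, while $b^{\dag}b = cc^{\dag} = e$ is trivially hermitian. The sole obstacle in the whole argument is the identity $aa^{\dag}b = b$: this is where the hypothesis $cA = A$ is essential, and once it is in place everything else is bookkeeping. An alternative route, closer in spirit to the operator proof, would be to apply the left regular representation $L\colon A \to L(A)$ to the factorization $a = bc$, observe that $L_b$ is injective and $L_c$ surjective, and invoke Proposition \ref{pro1} together with $(L_a)^{\dag} = L_{a^{\dag}}$ to recover $(L_b)^{\dag} = L_{ca^{\dag}}$ and $(L_c)^{\dag} = L_{a^{\dag}b}$ as left multiplications.
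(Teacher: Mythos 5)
Your proof is correct, but your primary argument reaches the key identity by a genuinely more elementary route than the paper does. The paper's proof passes through the left regular representation: it writes $L_a=L_bL_c$, notes $N(L_b)=0$, $R(L_c)=A$ and $(L_a)^{\dag}=L_{a^{\dag}}$, and then invokes Proposition \ref{pro1} to obtain $(L_c)^{\dag}=L_{a^{\dag}b}$ with $L_c(L_c)^{\dag}=I_A$; evaluating at $e$ yields $c(a^{\dag}b)=e$, after which it verifies, exactly as you do, that $a^{\dag}b$, $ca^{\dag}$ and $c^{\dag}b^{\dag}$ are normalized generalized inverses with hermitian products and concludes by the uniqueness of the Moore--Penrose inverse (\cite[Lemma 2.1]{R}). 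In other words, the paper's proof is precisely the ``alternative route'' you sketch in your last sentence. Your main argument instead derives $ca^{\dag}b=e$ purely inside $A$: from $cA=A$ you get $aA=bcA=bA\ni b$, so $aa^{\dag}$ fixes $b$, whence $b(ca^{\dag}b)=aa^{\dag}b=b$ and injectivity of $L_b$ forces $ca^{\dag}b=e$. This is a real simplification: it avoids Proposition \ref{pro1} entirely (and with it the hermitian-idempotent machinery of \cite{B} and \cite{P} that proposition rests on), needing only the definition of $a^{\dag}$, the fact that $e$ is hermitian, and the uniqueness lemma of \cite{R}. What the paper's route buys in exchange is uniformity -- the Banach algebra statement is exhibited as a literal corollary of the operator statement, which is the pattern used repeatedly in Section 3 (e.g.\ in the proof of Theorem \ref{thm3.7}) -- whereas your route shows the proposition is self-contained at the algebra level. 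One small presentational point: you announce that (i) and (ii) are ``established simultaneously'' before checking that $ca^{\dag}$ and $a^{\dag}b$ really are Moore--Penrose inverses; that check, which you do carry out in your third paragraph, is where the existence claims actually get proved, so it should logically precede the claim.
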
  
\begin{proof} Consider the maps $L_a$, $L_b$, $L_c\in L(A)$. It is clear that
$L_a=L_bL_c$, $N(L_b)=0$ and $R(L_c)=A$. What is more, according to \cite[Remark 5(ii)]{B},
$(L_a)^{\dag}$ exists and $(L_a)^{\dag}=L_{a^\dag}\in L(A)$. 
Therefore, according to Proposition \ref{pro1}, $(L_b)^{\dag}$ exists  and $(L_b)^{\dag}L_b=I_A$,
$(L_c)^{\dag}$ exists and $L_c(L_c)^{\dag}=I_A$, 
$L_{a^{\dag}}=(L_c)^{\dag}(L_b)^{\dag}$, $(L_b)^{\dag}=L_{ca^{\dag}}$ and $(L_c)^{\dag}= L_{a^{\dag}b}$,
where $I_A\in L(A)$ denotes the identity map of $A$.\par
\indent Next consider $c'=(L_c)^{\dag}(e)=a^{\dag}b$. Since $L_c(L_c)^{\dag}=I_A$, 
$cc'=e$. As a result, $c=cc'c$ and $c'=c'cc'$. In addition,
$c'c=a^{\dag}bc=a^{\dag}a$, which is a hermitian idempotent. 
Consequently, according to \cite[Lemma 2.1]{R}, $c^{\dag}$
exists, $c^{\dag}=c'=a^{\dag}b$ and $cc^{\dag}=e$.\par

\indent Let $b'=ca^{\dag}$. Since  $c^{\dag}=a^{\dag}b$ and  $cc^{\dag}=e$,
$b'b=e$. In particular, $b=bb'b$ and $b'=b'bb'$. Moreover,
$bb'=bca^{\dag}=aa^{\dag}$, which is a hermitian idempotent.
Thus, according to \cite[Lemma 2.1]{R}, $b^{\dag}$ exists,
$b^{\dag}=b'=ca^{\dag}$ and $b^{\dag}b=e$.\par

\indent Finally, define $a'=c^{\dag}b^{\dag}$. Then, since $b^{\dag}b=e$ and $cc^{\dag}=e$,
$aa'a=a$, $a'aa'=a'$, $aa'=bb^{\dag}$ and $a'a=c^{\dag}c$. Therefore, according to
 \cite[Lemma 2.1]{R}, $a^{\dag}=a'=c^{\dag}b^{\dag}$.
\end{proof}
\indent In the following theorem, given $A$ a unital Banach algebra, $A^{-1}$
will stand for the set of all invertible elements of $A$.\par
\begin{thm}\label{thm3.7}In the conditions of Proposition \ref{prop3.6},
the following statements are equivalent.
\begin{align*}
&\hbox{\rm (i) } a\in A \hbox{  is EP},&&\hbox{\rm (ii) } bb^{\dag}=c^{\dag}c,&\\
&\hbox{\rm (iii) } (b^{\dag})^{-1}(0)=c^{-1}(0),& &\hbox{\rm (iv) } bA=c^{\dag}A,&\\
&\hbox{\rm (v) } b_{ -1}(0)=(c^{ \dag})_{ -1}(0), &&\hbox{\rm (vi) } Ac=Ab^{ \dag},\\
&\hbox{\rm (vii) } (e-c^{\dag}c)b=0 \hbox{ and } c(e-bb^{\dag})=0,&
&\hbox{\rm (viii) } b^{\dag}(e-c^{\dag}c)=0 \hbox{ and } c(e-bb^{\dag})=0,&\\
&\hbox{\rm (ix) } (e-c^{\dag}c)b=0 \hbox{ and } (e-bb^{\dag})c^{\dag}=0,&
&\hbox{\rm (x) }  b^{\dag}(e-c^{\dag}c)=0 \hbox{ and } (e-bb^{\dag})c^{\dag}=0,&\\
&\hbox{\rm (xi) } c(e-bb^{\dag})=0 \hbox{ and } b^{\dag}(e-c^{\dag}c)b=0,&
&\hbox{\rm (xii) } b^{\dag}(e-c^{\dag}c)=0 \hbox{ and } c(e-bb^{\dag})c^{\dag}=0,&\\
&\hbox{\rm (xiii) } \exists\hbox{ } x\in A^{-1}: c=xb^{\dag},&
&\hbox{\rm (xiv) }\exists \hbox{ }y\in A: y^{-1}(0)=0  \hbox{ and } c=yb^{\dag}, &\\
&\hbox{\rm (xv) } \exists  \hbox{ } z_1, z_2\in A: c=z_1b^{\dag}  \hbox{ and }
 b^{\dag}=z_2c,&&\hbox{\rm (xvi) }\exists \hbox{ } u\in A^{-1}: b=c^{\dag}u,&\\
&\hbox{\rm (xvii) } \exists \hbox{ }v\in A: vA=A  \hbox{ and } b=c^{\dag}v, &
&\hbox{\rm (xviii) }\exists  \hbox{ } w_1, w_2\in A: b=c^{\dag}w_1  \hbox{ and }
 c^{\dag}=bw_2,\\
&\hbox{\rm (xix) }\exists \hbox{ } h_1, h_2\in A: b=c^{\dag}h_2 \hbox{ and }  c=h_1b^{\dag},&
&\hbox{\rm (xx) }\exists \hbox{ } k_1, k_2\in A: c^{\dag}=bk_2 \hbox{ and } b^{\dag}=k_1c,&\\
&\hbox{\rm (xxi) }\exists s_1\in A: s_1^{-1}(0)=0 \hbox{ and }  b^{\dag}=s_1c,&
&\hbox{\rm (xxii) }\exists s_2\in A: s_2A=A \hbox{ and } c^{\dag}=bs_2,&\\
&\hbox{\rm (xxiii) } bA^{ -1}=c^{ \dag}A^{ -1},& &\hbox{\rm (xxiv) }A^{ -1}c=A^{ -1}b^{ \dag},&\\
&\hbox{\rm (xxiv) }a\in c^{ \dag}A\cap Ab^{ \dag},& &\hbox{\rm (xxvi) } a^{ \dag}\in bA\cap Ac.&\\
\end{align*}
\end{thm}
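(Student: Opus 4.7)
The plan is to reduce every condition to the operator-level statements already established in Theorems \ref{thm3.2}, \ref{thm3.4}, and \ref{thm3.5} by transporting through the left regular representation $L \colon A \to L(A)$, with a parallel use of the right regular anti-representation $R \colon A \to L(A)$ for the right-sided clauses. The hypotheses $b^{-1}(0) = 0$ and $cA = A$ translate to $L_b$ injective and $L_c$ surjective; Proposition \ref{prop3.6} together with \cite[Remark 5(ii)]{B} gives $(L_a)^{\dag} = L_{a^{\dag}}$, $(L_b)^{\dag} = L_{b^{\dag}}$ and $(L_c)^{\dag} = L_{c^{\dag}}$; and Remark \ref{rema102}(ii) asserts $a$ is EP iff $L_a$ is EP. With this dictionary in place, $L_a = L_b L_c$ fits the hypotheses of Proposition \ref{pro1}, and Theorems \ref{thm3.2}, \ref{thm3.4}, \ref{thm3.5} immediately yield the equivalences (i)--(iv), (vii)--(xii), and (xiii)--(xxii) respectively. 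The only point requiring care is that the operator-level witnesses in Theorem \ref{thm3.5} (the maps $U, U_1, W, W_1, H_i, K_i, S_i$) lift to elements of $A$, which is clear because $L$ is an algebra embedding and invertibility, injectivity, and surjectivity of $L_x$ correspond respectively to $x \in A^{-1}$, $x^{-1}(0) = 0$, and $xA = A$.

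For the right-sided statements (v), (vi), (xxiii), (xxiv) the analog proceeds with $R_a = R_c R_b$. The hypothesis $cA = A$ supplies $u \in A$ with $cu = e$, whence $R_u R_c = I_A$ and $R_c$ is injective; dually, $b^{\dag} b = e$ from Proposition \ref{prop3.6}(i) gives $R_b R_{b^{\dag}} = I_A$ and $R_b$ is surjective. One must verify, by straightforward adaptations of \cite[Remark 5(ii) and Remark 12]{B}, that $(R_a)^{\dag} = R_{a^{\dag}}$ and that $R_a$ is EP if and only if $a$ is EP; Theorem \ref{thm3.2} applied to $R_a = R_c R_b$ then translates into $N(R_b) = N(R_{c^{\dag}})$, i.e.\ $b_{-1}(0) = (c^{\dag})_{-1}(0)$, and $R(R_c) = R(R_{b^{\dag}})$, i.e.\ $Ac = Ab^{\dag}$. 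Conditions (xxiii) and (xxiv) then fall out of (iv) and (vi): since $e \in A^{-1}$ one has $bA^{-1} \subseteq bA$ and $c^{\dag}A^{-1} \subseteq c^{\dag}A$ for free, while the reverse inclusions are obtained by invoking the explicit invertible element $cb \in A^{-1}$ that arises in the proof of Theorem \ref{thm3.5}(ii),(v), which allows rewriting any $c^{\dag}y$ with $y \in A^{-1}$ in the form $bx$ with $x \in A^{-1}$ (and symmetrically for $A^{-1}c$ versus $A^{-1}b^{\dag}$).

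Finally, (xxv) and (xxvi) are established by direct computation from Proposition \ref{prop3.6}. Assuming $a$ is EP, the identities $aa^{\dag} = bb^{\dag}$, $a^{\dag}a = c^{\dag}c$, and $aa^{\dag} = a^{\dag}a$ applied to $a = aa^{\dag}a$ yield $a = c^{\dag}ca = c^{\dag}(ca) \in c^{\dag}A$ and $a = abb^{\dag} = (ab)b^{\dag} \in Ab^{\dag}$; similarly $a^{\dag} = aa^{\dag}\,a^{\dag} = b(b^{\dag}a^{\dag}) \in bA$ and $a^{\dag} = a^{\dag}\,a^{\dag}a = (a^{\dag}c^{\dag})c \in Ac$. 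Conversely, $a \in c^{\dag}A$ combined with $cc^{\dag} = e$ forces $a = c^{\dag}ca = a^{\dag}a \cdot a$, and $a \in Ab^{\dag}$ combined with $b^{\dag}b = e$ forces $a = abb^{\dag} = a \cdot aa^{\dag}$; multiplying the first on the right by $a^{\dag}$ and the second on the left by $a^{\dag}$ gives $aa^{\dag} = a^{\dag}a \cdot aa^{\dag} = a^{\dag}a$, i.e.\ $a$ is EP. Statement (xxvi) is dispatched symmetrically, using $cc^{\dag} = e$ and $b^{\dag}b = e$ to collapse $a^{\dag} = bw$ and $a^{\dag} = vc$ into $a^{\dag} = aa^{\dag} \cdot a^{\dag}$ and $a^{\dag} = a^{\dag} \cdot a^{\dag}a$, from which $aa^{\dag} = a^{\dag}a$ follows by the same manipulation.

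The main obstacle is bookkeeping rather than depth: twenty-six conditions must be sorted into a handful of genuinely different reductions, and the right-regular analogs of \cite[Remark 5(ii) and Remark 12]{B} must be reverified without the benefit of an involution, since $R$ is only an anti-homomorphism of $A$ into $L(A)$ and the usual argument for $(L_a)^{\dag} = L_{a^{\dag}}$ has to be transposed accordingly.
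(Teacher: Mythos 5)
Your proposal follows the paper's own strategy for the bulk of the theorem: transport everything through the left regular representation, use Proposition \ref{prop3.6} together with $(L_a)^{\dag}=L_{a^{\dag}}$, $(L_b)^{\dag}=L_{b^{\dag}}$, $(L_c)^{\dag}=L_{c^{\dag}}$ and Remark \ref{rema102}(ii), and then invoke Theorems \ref{thm3.2}, \ref{thm3.4} and \ref{thm3.5} for (i)--(iv), (vii)--(xii) and (xiii)--(xxii). You genuinely diverge in two places, and both divergences are legitimate. For (v) and (vi) the paper stays at the element level: Proposition \ref{prop3.6} gives $a_{-1}(0)=b_{-1}(0)$, $(a^{\dag})_{-1}(0)=(c^{\dag})_{-1}(0)$, $Aa=Ac$ and $Aa^{\dag}=Ab^{\dag}$, and then \cite[Theorem 18(viii)-(ix)]{B} finishes; your route through the right regular anti-representation $R_a=R_cR_b$ also works, at the cost of re-proving $(R_x)^{\dag}=R_{x^{\dag}}$ and the equivalence ``$a$ EP iff $R_a$ EP'' (which you correctly flag as needing verification; both follow from $\exp(itR_h)=R_{\exp(ith)}$ and $\parallel R_x\parallel=\parallel x\parallel$). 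For (xxv)--(xxvi) the paper simply cites \cite[Theorem 18(xiii)-(xiv)]{B} after matching $c^{\dag}A=a^{\dag}A$, $Ab^{\dag}=Aa^{\dag}$, $bA=aA$, $Ac=Aa$; your direct computation is correct, self-contained, and arguably more informative.

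Two points need repair. First, for (xiii)--(xxii) you apply Theorem \ref{thm3.5} as a black box and assert that the operator witnesses ``lift to elements of $A$ because $L$ is an algebra embedding.'' That justification fails as stated: the image of $L$ is in general a proper subalgebra of $L(A)$, so from $L_c=UL_{b^{\dag}}$ with $U\in L(A)$ invertible one cannot conclude that $U=L_x$ for some $x\in A$. What saves the argument---and what the paper prescribes---is to rerun the \emph{proof} of Theorem \ref{thm3.5} with $T=L_a$, $B=L_b$, $C=L_c$: the witnesses constructed there are explicit products, $U=CB=L_{cb}$, $Z=B^{\dag}C^{\dag}=L_{b^{\dag}c^{\dag}}$, and so on, hence they do lie in the image of $L$, and injectivity of $L$ converts $UZ=ZU=I_A$ into $cb\in A^{-1}$ (the paper records this as $x=u=cb$). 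You evidently know this, since you invoke $cb$ when treating (xxiii)--(xxiv), but the lifting claim as written is the wrong reason; the converse directions, from element witnesses to operator witnesses to EP, are the unproblematic ones. Second, for (xxiii) and (xxiv) you only argue the implication from EP to the set equalities; the converse is missing. It is trivial---$b=be\in bA^{-1}=c^{\dag}A^{-1}$ yields statement (xvi), hence EP, and symmetrically for (xxiv) and (xiii), which is exactly how the paper disposes of these two items---but it must be said for the chain of equivalences to close.
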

\begin{proof}
Let $T=L_a$, $B=L_b$ and $C=L_c\in L(A)$. 
Then, according to Proposition \ref{prop3.6} and
\cite[Theorem  5(ii)]{B},
$T$, $B$ and $C$ are Moore-Penrose invertible operators, what is more,
$T^{\dag}=L_{a^{\dag}}$, $B^{\dag}=L_{b^{\dag}}$ and $C^{\dag}=L_{c^{\dag}}$.
Recall also that according to
\cite[Remark 12]{B}, $a\in A$ is EP if and only if $L_a\in L(A)$ is EP.
Therefore, since  $T=BC$, $N(B)=0$, $R(C)=A$, according to 
Theorem \ref{thm3.2} and Theorem \ref{thm3.4}, it is easy to prove that statements (i)-(iv) and  (vii)-(xii) are equivalent.\par

\indent On the other hand, according to Proposition \ref{prop3.6}(i)-(ii),  
$a_{ -1}(0)=b_{ -1}(0)$ and $(a^{ \dag})_{ -1}(0)=(c^{ \dag})_{ -1}(0)$. Consequently, according to \cite[Theorem 18(viii)]{B},
statements (i) and (v) are equiva-
lent. In addition, a straightforward
calculation proves that $R_a=R_{a^{\dag}a}=R_{c^{\dag}c}=R_c$.
Similarly, $R_{a^{\dag}}=R_{aa^{\dag}}=R_{bb^{\dag}}=R_{b^{\dag}}$.
Consequently, $Aa=Ac$ and $Aa^{ \dag}=Ab^{ \dag}$. 
Therefore,  according to \cite[Theorem 18(ix)]{B},
statements (i) and (vi) are equivalent. \par

\indent To prove that statement (i) and statements (xiii) -(xxii) are equivalent, use as before
the multiplication operators and adapt the proof of Theorem \ref{thm3.5} to the present
situation. Note that $x=u=cb$.\par
\indent Observe that statement (xxiii) (respectively  (xxiv))
is equivalent to statement (xvi) (respectively (xiii)).\par

\indent Finally, according to  \cite[Theorem 18(xiii)-(xiv)]{B},
 statement (i) and statements (xxv)-(xxvi) are equivalent. 
\end{proof}

\indent In the frame of unital $C^*$-algebras, the  results of Theorem \ref{thm3.7} can be
reformulated using the adjoint instead of the
Moore-Penrose inverse. However, to this end some preparation
is needed. \par

\begin{rema}\label{rem105}\rm Recall  that given a unital $C^*$-algebra $A$ and
$x\in A$, then $x^{ \dag} A=x^*A$, $(x^{ \dag})^{ -1}(0)= (x^*)^{ -1}(0)$,
$Ax^{ \dag} =Ax^*$ and $(x^{ \dag})_{ -1}(0)= (x^*)_{ -1}(0)$,
see  \cite[Lemma 1.5]{K}. 
\end{rema}

\indent Compare the following lemma with  \cite[Lemma 1.5]{K}.\par

\begin{lem}\label{lem3.8} Let $A$ be a unital $C^*$-algebra and consider
$a\in A$ such that $a$ is Moore-Penrose invertible.\par
\noindent (i) If $p= aa^{ \dag}$, then $v=e-p+(a^{ \dag})^*a^{ \dag}\in A^{ -1}$,
 $a^{ \dag}=a^*v$ and $aa^*v=vaa^*=p$.\par
\noindent (ii) If $q=a^{ \dag}a$, then $w=e-q +a^{ \dag}(a^{ \dag})^*\in A^{ -1}$,
 $a^{ \dag}=wa^*$ and $wa^*a=a^*aw=q$.\par
\indent Suppose, in addition, that $a=bc$, where $b$ and $c$ are such that
$b^{ -1}(0)=0$ and $cA=A$. Then,\par
\noindent (iii) $b^{ \dag}(b^{ \dag})^*\in A^{ -1}$ and $(b^{ \dag}(b^{ \dag})^*)^{ -1}=b^*b$,\par
\noindent (iv) $(c^{ \dag})^* c^{ \dag}\in A^{ -1}$ and  $((c^{ \dag})^* c^{ \dag})^{ -1}=cc^*$,\par
\noindent (v) $vb=(b^{ \dag})^* (c^{ \dag})^* c^{ \dag}$ and $cw=b^{ \dag}(b^{ \dag})^*(c^{ \dag})^*$. 
\end{lem}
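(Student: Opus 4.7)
The proof breaks naturally into three clusters: (i)--(ii), (iii)--(iv), and (v). I would proceed as follows.

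For (i), the plan is to exploit that $p=aa^\dagger$ and $q=a^\dagger a$ are self-adjoint idempotents (this is the $C^*$-version of the hermitian requirement in Definition \ref{df2}), together with the identity $a=aa^\dagger a$ and its adjoint $a^*=a^*(a^\dagger)^*a^*$. In particular, since $p^*=p$, the equation $(aa^\dagger)^*=p$ gives $(a^\dagger)^*a^*=p$, which in turn yields $a^*p=a^*$ and $p(a^\dagger)^*=(a^\dagger)^*$. With these in hand, the two identities $aa^*v=p$ and $vaa^*=p$ are essentially a one-line expansion: for instance $aa^*v=aa^*-aa^*p+a(a^\dagger a)^*a^\dagger=aa^*-aa^*+aa^\dagger=p$. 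To get invertibility of $v$, I would guess the inverse $v^{-1}=e-p+aa^*$ and check $(e-p+aa^*)v=e=v(e-p+aa^*)$; in each product the cross terms collapse because $pv=(a^\dagger)^*a^\dagger=vp$ and $aa^*v=vaa^*=p$. Finally $a^*v=a^*-a^*p+a^*(a^\dagger)^*a^\dagger=a^*(a^\dagger a)\cdot \text{(terms)}$: carefully, $a^*v=a^*-a^*+(a^\dagger a)a^\dagger=a^\dagger$. Part (ii) is formally the left-right mirror, obtained by replacing $p$ with $q$, $a$ with $a^*$, and swapping the roles of $a^\dagger a$ and $aa^\dagger$; the same calculation gives $w^{-1}=e-q+a^*a$ and $a^\dagger=wa^*$.

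For (iii) and (iv), I would use the identities $b^\dagger b=e$ and $cc^\dagger=e$ from Proposition \ref{prop3.6}(i)-(ii). Taking adjoints of these gives $b^*(b^\dagger)^*=e$ and $(c^\dagger)^*c^*=e$. Then, using that $bb^\dagger$ is self-adjoint,
\[
b^*b\cdot b^\dagger(b^\dagger)^*=b^*\,(bb^\dagger)\,(b^\dagger)^*=b^*(bb^\dagger)^*(b^\dagger)^*=b^*(b^\dagger)^*b^*(b^\dagger)^*=e,
\]
and similarly $b^\dagger(b^\dagger)^*\cdot b^*b=b^\dagger(bb^\dagger)^*b=(b^\dagger b)(b^\dagger b)=e$, establishing (iii). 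Part (iv) is the dual argument using $(cc^\dagger)^*=cc^\dagger$ and $(c^\dagger)^*c^*=e$.

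For (v), the strategy is simply to substitute the formulas from Proposition \ref{prop3.6}(iii) into $v$ and $w$. Namely, $p=aa^\dagger=bb^\dagger$, $q=a^\dagger a=c^\dagger c$, $a^\dagger=c^\dagger b^\dagger$, and $(a^\dagger)^*=(b^\dagger)^*(c^\dagger)^*$. Then
\[
vb=b-bb^\dagger b+(b^\dagger)^*(c^\dagger)^*c^\dagger b^\dagger b,
\]
and $b^\dagger b=e$ collapses this to $(b^\dagger)^*(c^\dagger)^*c^\dagger$. The computation of $cw$ is symmetric, relying on $cc^\dagger=e$ instead.

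I do not foresee a genuine obstacle, since every step is formal manipulation in the $C^*$-algebra using the four Moore-Penrose equations and the two special identities $b^\dagger b=e$, $cc^\dagger=e$ coming from injectivity/surjectivity. The only point demanding care is ensuring that the candidate inverse $e-p+aa^*$ in part (i) actually works on both sides; getting the cross terms $pv$ and $vp$ to match requires invoking both $a^\dagger p=a^\dagger$ and $p(a^\dagger)^*=(a^\dagger)^*$, and this is the one place where failing to use self-adjointness of $p$ would break the argument.
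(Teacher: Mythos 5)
Your proposal is correct and follows essentially the same route as the paper: the paper also obtains (ii) by interchanging $a$ and $a^*$, derives (iii)--(iv) from the identities $b^{\dag}b=e$ and $cc^{\dag}=e$ of Proposition \ref{prop3.6}, and proves (v) by the same substitution $v=e-bb^{\dag}+(b^{\dag})^*(c^{\dag})^*c^{\dag}b^{\dag}$. The only cosmetic difference is in (i): the paper packages the invertibility of $v$ by observing that $(a^{\dag})^*a^{\dag}$ and $aa^*$ are mutually inverse elements of the corner subalgebra $pAp$, whereas you verify the explicit inverse $e-p+aa^*$ directly, which amounts to the same computation.
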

\begin{proof} Note that $(a^{ \dag})^*a^{ \dag}$ and $aa^*$ belong to
the subalgebra $pAp$. What is more, $(a^{ \dag})^*a^{ \dag}aa^*=aa^*(a^{ \dag})^*a^{ \dag}=p$.
Consequently, $v\in A^{ -1}$ and a straightforward calculation
proves that $a^{ \dag}=a^*v$ and $aa^*v=vaa^*=p$.\par

\indent Interchanging $a$ with $a^*$, statement (ii) can be derived from statement (i).\par
\indent  Statements (iii)-(iv) can be easily derived from  Proposition \ref{prop3.6}(i)-(ii).\par
\indent Note that $v=e-bb^{ \dag} +(b^{ \dag})^*(c^{ \dag})^*c^{ \dag}b^{ \dag}$.
Consequently, according to Proposition \ref{prop3.6}(i), 
$vb=(b^{ \dag})^*(c^{ \dag})^*c^{ \dag}$. A similar argument, using 
 Proposition \ref{prop3.6}(ii),  proves the remaining
identity.
\end{proof}
\begin{thm}\label{thm3.9}Let $A$ be a unital $C^*$-algebra and consider $a\in A$
such that $a^{ \dag}$ exists. Suppose that there exist $b,c\in A$ such that $a=bc$, $b^{-1}(0)=0$ and $cA=A$. Then,
the following statements are equivalent.
\begin{align*}
&\hbox{\rm (i) } a\in A \hbox{  is EP},&&\hbox{\rm (ii) } a\in c^*A\cap Ab^*,&\\
&\hbox{\rm (iii) } (b^*)^{-1}(0)=c^{-1}(0),& &\hbox{\rm (iv) } bA=c^*A,&\\
&\hbox{\rm (v) } b_{ -1}(0)=(c^*)_{ -1}(0), &&\hbox{\rm (vi) } Ac=Ab^*,\\
&\hbox{\rm (vii) } bA^{ -1}=c^*A^{ -1},& &\hbox{\rm (viii) }A^{ -1}c=A^{ -1}b^*,&\\
&\hbox{\rm (ix) } \exists\hbox{ } x\in A^{-1}: c=xb^*,&
&\hbox{\rm (x) }\exists \hbox{ }y\in A: y^{-1}(0)=0  \hbox{ and } c=yb^*, &\\
&\hbox{\rm (xi) } \exists  \hbox{ } z_1, z_2\in A: c=z_1b^*  \hbox{ and }
 b^*=z_2c,&&\hbox{\rm (xii) } \exists \hbox{ }v\in A: vA=A  \hbox{ and } b=c^*v, &\\
&\hbox{\rm (xiii) }\exists s_1\in A: s_1^{-1}(0)=0 \hbox{ and }  b^*=s_1c,&
&\hbox{\rm (xiv) }\exists s_2\in A: s_2A=A \hbox{ and } c^*=bs_2.&\\
\end{align*}
\end{thm}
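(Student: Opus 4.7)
The strategy is to reduce Theorem~\ref{thm3.9} to Theorem~\ref{thm3.7} by translating each statement involving $b^{\dag}$ or $c^{\dag}$ into the corresponding adjoint-based statement, exploiting the tight connection between $x^{\dag}$ and $x^{*}$ in a $C^{*}$-algebra furnished by Remark~\ref{rem105} and Lemma~\ref{lem3.8}. First, I would note that $b$ and $c$ are themselves Moore-Penrose invertible by Proposition~\ref{prop3.6}(i)-(ii), so Lemma~\ref{lem3.8}(i)-(ii) applies to each of them, producing elements $v_b, w_b, v_c, w_c \in A^{-1}$ with $b^{\dag} = b^{*}v_b = w_b b^{*}$ and $c^{\dag} = c^{*}v_c = w_c c^{*}$.

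For the equality-type characterisations, the translation is immediate. Statement (ii) corresponds to Theorem~\ref{thm3.7}(xxv) via the identities $c^{\dag}A = c^{*}A$ and $Ab^{\dag} = Ab^{*}$ from Remark~\ref{rem105}; statements (iii)--(vi) coincide verbatim with Theorem~\ref{thm3.7}(iii)--(vi) by the same identities; and statements (vii)--(viii) match Theorem~\ref{thm3.7}(xxiii)--(xxiv) because $c^{\dag}A^{-1} = c^{*}v_c A^{-1} = c^{*}A^{-1}$ and $A^{-1}b^{\dag} = A^{-1}w_b b^{*} = A^{-1}b^{*}$, using that one- or two-sided multiplication by an invertible element permutes $A^{-1}$.

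For the existence-type statements (ix)--(xiv), I would rely on the elementary observation that for any $u \in A^{-1}$, both left and right multiplication by $u$ preserve each of the following properties of an element $y \in A$: being invertible, satisfying $y^{-1}(0) = 0$, and satisfying $yA = A$. Consequently, each witness in a statement of Theorem~\ref{thm3.7} can be converted into a witness for the corresponding statement of Theorem~\ref{thm3.9} by pre- or post-multiplying by the appropriate one of $v_b, w_b, v_c, w_c$ or its inverse. For instance, to pass from Theorem~\ref{thm3.7}(xiv) to statement (x): if $c = y b^{\dag}$ with $y^{-1}(0) = 0$, then $c = (y w_b)b^{*}$, and since $w_b \in A^{-1}$, the element $y w_b$ still satisfies $(y w_b)^{-1}(0) = 0$; the converse substitutes $w_b^{-1}$ for $w_b$. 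The same recipe handles (ix) via $v_b$, (xi) via $w_b$ and $w_b^{-1}$, (xii) via $v_c^{-1}$, (xiii) via $w_b$, and (xiv) via $v_c^{-1}$.

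There is essentially no obstacle: once Lemma~\ref{lem3.8} is in hand, the proof is mechanical. The only care required is matching the side (left vs.\ right) on which one substitutes $b^{\dag} = b^{*}v_b$ or $b^{\dag} = w_b b^{*}$ (and analogously for $c^{\dag}$), so that the witnessing element stays on the correct side of the equation. Because invertibility, the property $y^{-1}(0) = 0$, and the property $yA = A$ are all preserved under both one-sided multiplications by invertibles, every such substitution goes through without incident, and the full list of equivalences follows.
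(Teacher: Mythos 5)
Your proposal is correct, and at the top level it follows the paper's strategy (reduce to Theorem \ref{thm3.7} using the $x^{\dag}$--$x^{*}$ connection in a $C^{*}$-algebra), but for statements (vii)--(xiv) your execution is genuinely different from the paper's. The paper does not translate those statements one by one: it first shows (vii), (viii) and (ix) are mutually equivalent by taking adjoints, then proves (ix) $\Leftrightarrow$ (i) by a bespoke computation --- invoking Theorem \ref{thm3.7}(xvi) to write $b=c^{\dag}u$ with $u\in A^{-1}$ and expanding $b=c^{\dag}b^{\dag}bu=a^{\dag}bu=a^{*}vbu=c^{*}z$ via Proposition \ref{prop3.6}(i) and Lemma \ref{lem3.8}(i),(iv),(v) applied to $a$ --- and finally disposes of (x)--(xiv) by one-way implications from (ix) that close the loop through statements (iii) or (iv). You instead apply Lemma \ref{lem3.8}(i)--(ii) directly to $b$ and $c$ (legitimate, since Proposition \ref{prop3.6} makes both Moore--Penrose invertible) to obtain $b^{\dag}=b^{*}v_{b}=w_{b}b^{*}$ and $c^{\dag}=c^{*}v_{c}=w_{c}c^{*}$ with invertible factors, and then convert witnesses of the corresponding statements (xiii)--(xv), (xvii), (xxi)--(xxv) of Theorem \ref{thm3.7} into witnesses for (ii), (vii)--(xiv) by absorbing the invertible factor, using that one-sided multiplication by invertibles preserves invertibility, $y^{-1}(0)=0$ and $yA=A$. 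This buys a shorter, completely mechanical proof that avoids Lemma \ref{lem3.8}(v) altogether; in fact your substitution reproduces the paper's element exactly, since $cc^{\dag}=e$ forces $v_{c}=(c^{\dag})^{*}c^{\dag}$, so $b=c^{\dag}u=c^{*}v_{c}u=c^{*}z$ with the paper's $z=(c^{\dag})^{*}c^{\dag}u$. One slip to fix in your final list: statement (ix) must be handled via $w_{b}$ (the factor sitting on the left of $b^{*}$), not $v_{b}$, exactly as in your own worked example for statement (x), whose form is identical; as written, $c=xb^{\dag}=xb^{*}v_{b}$ leaves $v_{b}$ stranded on the wrong side of $b^{*}$ and cannot be absorbed into the witness $x$.
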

\begin{proof} The equivalence among statements (i)-(vi) can be derived from Theorem \ref{thm3.7}
and the relationships among the null spaces and the ranges of the operators $L_{ x^{ \dag}}$
and $L_{ x^*}\in L(A)$, and of the operators $R_{ x^{ \dag}}$ and 
$R_{ x^*}\in L(A)$, for $x= b$ and $c$ (Remark \ref{rem105}).\par

\indent Note that statement (ix) is equivalent to the fact that there exists $z\in  A^{-1}$
such that $b=c^*z$, which in turn is equivalent to statements (vii) and (viii). 
Now well, if there exists $z\in A^{-1}$
such that $b=c^*z$, then $bA=c^*A=c^{\dag}A$ which, according to Theorem \ref{thm3.7}(iv),
implies that $a$ is EP. On the other hand, if $a$ is EP, according to Theoren \ref{thm3.7}(xvi),
there exists $u\in A^{-1}$ such that $b=c^{\dag}u$. Then, according to  
Proposition \ref{prop3.6}(i) and statements (i) and (v) of Lemma \ref{lem3.8}, 
$$
b=c^{\dag}b^{\dag}bu=a^{\dag}bu=a^*vbu=c^*b^*(b^{ \dag})^* (c^{ \dag})^* c^{ \dag}u=c^*z,
$$
where $z=(c^{ \dag})^* c^{ \dag}u$.   However, according to Lemma \ref{lem3.8} (iv), $z\in A^{-1}$.\par
\indent Clearly, statement (ix) implies statement (x), which in turn implies statement (iii).\par
\indent Similarly, statement (ix) implies statement (xi), which in turn implies statement (iii).\par
\indent Statement (ix) implies statement (xii), which in turn implies statement (iv).\par
\indent Finally, statement (ix) implies statements (xiii) and (xiv). On the other hand,
statement (xiii) implies statement (iii) and statement (xiv) implies statement (iv). 
\end{proof}

\indent Before the next theorem, recall that if $A$ is a unital  $C^*$-algebra and $a\in A$,
then $(aa^*)^{-1}(0)=(a^*)^{-1}(0)$ and $(a^*a)^{-1}(0)=a^{-1}(0)$. Moreover,
if $a\in A$ is Moore-Penrose invertible, then  $aa^*A=aA$ and $a^*aA=a^*A$, 
see \cite[Lemma 1.1(ii)]{DKS}.\par
\begin{thm}\label{thm3.10} In the conditions of Theorem \ref{thm3.9}, the following statements
are equivalent.\par
\noindent (i) $a\in A$ is EP,\par
\noindent (ii) $a^*a= c^*b^*bcbb^{ \dag}$ and $a^*a=c^*b^*c^{ \dag}cbc$,\par
\noindent (iii) $aa^*=bcc^*b^*c^*(c^*)^{ \dag}$ and $aa^*= bcbb^{ \dag}c^*b^*$,\par
\noindent (iv) $a^*a= c^*b^*bcbb^{ \dag}$ and  $aa^*=bcc^*b^*c^*(c^*)^{ \dag}$,\par
\noindent (v) $aa^*= c^{ \dag}b^{ \dag}bcbcc^*b^*$ and $a^*a= c^*b^*bcbb^{ \dag}$,\par
\noindent (vi) $a^*a= bc c^{ \dag}b^{ \dag}b^*c^*bc$ and $aa^*=bcc^*b^*c^*(c^*)^{ \dag}$,\par
\noindent (vii) $aa^*= c^{ \dag}b^{ \dag}bcbcc^*b^*$ and $a^*a= bc c^{ \dag}b^{ \dag}b^*c^*bc$.
\end{thm}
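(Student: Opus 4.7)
The plan is to reduce each of the identities in (ii)--(vii) to a pair of simpler equations which together characterize $a$ being EP. Set $p = aa^{\dag}$ and $q = a^{\dag}a$; by Proposition \ref{prop3.6}(iii) one has $p = bb^{\dag}$ and $q = c^{\dag}c$, and by Theorem \ref{thm3.7} $a$ is EP if and only if $p = q$. The key auxiliary observation is that $p = q$ is equivalent to the conjunction of $a = ap$ and $a = qa$: if $p = q$, both follow from $aa^{\dag}a = a$; conversely, left-multiplying $a = ap$ by $a^{\dag}$ gives $q = qp$, right-multiplying $a = qa$ by $a^{\dag}$ gives $p = qp$, and so $p = q$.

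I would then rewrite each identity in (ii)--(vii) using $bc = a$, $c^*b^* = a^*$, $cc^{\dag} = e$, $b^{\dag}b = e$ (Proposition \ref{prop3.6}), together with the $C^*$-identity $(c^*)^{\dag} = (c^{\dag})^*$, which in particular gives $c^*(c^*)^{\dag} = (c^{\dag}c)^* = q$. A direct calculation then shows that every statement decomposes into two equations drawn from
\[
a^*a = a^*ap,\quad a^*a = pa^*a,\quad a^*a = a^*qa,\quad aa^* = aa^*q,\quad aa^* = qaa^*,\quad aa^* = apa^*,
\]
one of which is equivalent to $a = ap$ and the other to $a = qa$. For example, the two identities of (vii) collapse to $aa^* = qaa^*$ (using $c^{\dag}b^{\dag}\cdot bc = a^{\dag}a = q$) and $a^*a = pa^*a$ (using $bc\cdot c^{\dag}b^{\dag} = aa^{\dag} = p$).

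The last step invokes the standard $C^*$-algebra identity $y^*y = 0 \Rightarrow y = 0$. For instance $a^*a = a^*ap$ yields $a^*a(e-p) = 0$, hence $(a(e-p))^*(a(e-p)) = (e-p)a^*a(e-p) = 0$, so $a = ap$; similarly $aa^* = apa^*$ gives $a(e-p)a^* = 0$ and therefore $a = ap$, while $aa^* = qaa^*$ gives $(e-q)aa^*(e-q) = 0$ and hence $a = qa$. The reverse implications (e.g.\ $a = ap$ implies $a^*a = a^*ap$) are trivial substitutions. Combining everything, each of (ii)--(vii) is equivalent to the pair ``$a = ap$ and $a = qa$'', hence to $p = q$ by the auxiliary observation, hence to (i). The main obstacle is the bookkeeping in the rewriting step, where one has to decide in each product whether to collapse $bc$ to $a$ or instead a pair like $bb^{\dag}$ or $c^{\dag}c$ to $p$ or $q$; beyond this, the only substantive tool used is the $C^*$-trick $y^*y = 0 \Rightarrow y = 0$.
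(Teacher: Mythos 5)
Your argument is correct in substance, and it is a genuinely different proof from the paper's; but in one place it proves a silently corrected statement rather than the printed one, and that must be said explicitly.

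The paper never isolates the projections the way you do: for each of (ii)--(vii) it extracts from one identity only an inclusion of null spaces or ranges (e.g.\ from $a^*a=c^*b^*bcbb^{\dag}$ that $(b^{\dag})^{-1}(0)\subseteq (a^*a)^{-1}(0)=a^{-1}(0)=c^{-1}(0)$, or from the first identity of (v) that $bA=aa^*A\subseteq c^{\dag}A$), and from the other identity, after cancelling with $b^{\dag}b=e$ and $cc^{\dag}=e$ (Proposition~\ref{prop3.6}), an equation such as $b^*(e-c^{\dag}c)b=0$, which it upgrades to $b^*(e-c^{\dag}c)=0$ by means of the decomposition $A=bA\oplus (b^{\dag})^{-1}(0)$ and Koliha's identities $(b^*)^{-1}(0)=(b^{\dag})^{-1}(0)$, $c^*A=c^{\dag}A$; the two one-sided inclusions are then matched against the complementary decompositions of $A$ and fed into Theorem~\ref{thm3.9}(iii)--(iv). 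Your proof bypasses all of this: every identity is reduced to one of six equations in $a$, $a^*$, $p=aa^{\dag}=bb^{\dag}$, $q=a^{\dag}a=c^{\dag}c$; each single such equation is shown \emph{equivalent} to $a=ap$ or to $a=qa$ via $y^*y=0\Rightarrow y=0$ (legitimate here, since in a $C^*$-algebra the hermitian idempotents $p,q$ are self-adjoint); and the pair gives $p=q$ by your two-line lemma $q=qp=p$. This is shorter and more self-contained (no Theorem~\ref{thm3.9}, no decompositions, no passage between $\ast$ and $\dag$ beyond $(c^*)^{\dag}=(c^{\dag})^*$), and it yields finer information: each individual identity is exactly one of the two ``halves'' $a=ap$, $a=qa$ of the EP condition, whereas the paper needs both identities of a pair before either inclusion can be closed into an equality. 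What the paper's coarser method buys is indifference to the exact shape of the middle factors --- it only uses that a right-hand side lies in $bA$, or ends in $bb^{\dag}$ or $(c^*)^{\dag}$ --- and it is the style of argument that survives in the Banach-algebra sections of the paper, where the trick $y^*y=0\Rightarrow y=0$ is unavailable.

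The discrepancy: in (vi) and (vii) the printed middle factor is $b^*c^*=(cb)^*$, not $c^*b^*=a^*$; your reduction of those statements to $a^*a=pa^*a$ tacitly replaces the former by the latter. As printed, the equivalence is false: take $A=M_2(\Bbb C)$ and any invertible, non-commuting $b,c$ (so that $b^*c^*=(cb)^*\neq (bc)^*=c^*b^*$); then $a=bc$ satisfies every hypothesis and is EP, being invertible, with $bcc^{\dag}b^{\dag}=e$, so the printed identity $a^*a=bcc^{\dag}b^{\dag}b^*c^*bc$ reduces to $c^*b^*bc=b^*c^*bc$, i.e.\ to $c^*b^*=b^*c^*$, which fails. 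Hence (i) does not imply (vi) or (vii) as printed. This is evidently a typographical error in the paper rather than a flaw in your plan --- the paper's own one-line argument that (i) implies (ii)--(vii) (substitute $a=aaa^{\dag}$, $a=a^{\dag}aa$) breaks at exactly the same spot, while its proofs of (vi),(vii)~$\Rightarrow$~(i) never look at the middle factor and so cannot detect it --- and your reading is certainly the intended one. But since the task is to prove the printed statement, the emendation $b^*c^*\to c^*b^*$ has to be stated as such; with it, your proof is complete.
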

\begin{proof} If $a\in A$ is EP, then $a=aaa^{ \dag}$ and $a=a^{ \dag}aa$. Using these
identities, statement (i) implies all the others.\par

\indent  On the other hand, suppose that statement (ii) holds. If  $a^*a= c^*b^*bcbb^{ \dag}$, then $(b^{ \dag})^{-1}(0)\subseteq 
(a^*a)^{-1}(0)=a^{-1}(0)=c^{ -1}(0)$. In addition, if $c^*b^*bc=a^*a=c^*b^*c^{ \dag}cbc$, according to
Propos-
ition \ref{prop3.6}(ii),  $b^*b=b^*c^{ \dag}cb$, equivalently,
$b^*(e-c^{ \dag}c)b=0$. Now well, since   $(b^*)^{ -1}(0)=(b^{ \dag})^{-1}(0)$ (\cite[Lemma 1.5]{K}),
according to the decomposition of $A$ defined by $bb^{ \dag}$,
i.e., $A=bA\oplus (b^{ \dag})^{ -1}(0)$, $b^*(e-c^{ \dag}c)=0$. Consequently, since $c^{-1}(0)=(c^{\dag}c)^{-1}(0)=(e-c^{\dag}c)A\subseteq (b^*)^{ -1}(0)$,
according to Theorem \ref{thm3.9}(iii), $a$ is EP.\par 

\indent To prove that statement (iii) implies that $a$ is EP, note that if  $aa^*=bcc^*b^*c^*(c^*)^{ \dag}$,
then according to \cite[Lemma 1.5]{K} and Proposition \ref{prop3.6}(ii),
$$
c^{-1}(0)=((c^*)^*)^{-1}(0)=((c^*)^{ \dag})^{-1}(0)\subseteq (aa^*)^{-1}(0)=(a^*)^{-1}(0)=(a^{\dag})^{-1}(0)=(b^{\dag})^{-1}(0).
$$
\indent Moreover, if $bcc^*b^*=aa^*= bcbb^{ \dag}c^*b^*$, then according to Proposition \ref{prop3.6}(i), 
$cc^*=cbb^{ \dag}c^*$, equivalently $c(e-bb^{ \dag})c^*=0$. Now well,  since $c^*A=c^{ \dag}A$ ( \cite[Lemma 1.5]{K}),
according to the decomposition of $A$ defined by $c^{ \dag}c$,
i.e., $A=c^{ \dag}A\oplus c^{ -1}(0)$, $c(e-bb^{ \dag})=0$, equivalently $(b^{ \dag})^{-1}(0)=(bb^{ \dag})^{-1}(0)=(e-bb^{ \dag})A\subseteq c^{-1}(0)$.
Therefore, since $(b^*)^{ -1}(0)=(b^{ \dag})^{-1}(0)$, according to Theorem \ref{thm3.9}(iii),
$a$ is EP.\par

\indent Next consider statement (iv). Note that according to what has been proved,
 $(b^*)^{-1}(0)=c^{-1}(0)$, which, according again to  Theorem \ref{thm3.9}(iii),
implies that $a$ is EP.\par

\indent If statement (v) holds, then according to \cite[Lemma 1.1(ii)]{DKS},  \cite[Lemma 1.5]{K} and the first identity
of statement (v),  $bA=aA=aa^*A\subseteq c^{ \dag}A=c^*A$.
What is more, according to what has been proved, the second identity of statement (v)  implies that
$(b^{ \dag})^{-1}(0)=(b^*)^{-1}(0)\subseteq c^{-1}(0)$. Consequently, according to the decompositons of $A$
defined by $c^{ \dag}c$ and $bb^{ \dag}$ considered above,
$(b^*)^{-1}(0)= c^{-1}(0)$ and $bA=c^*A$. Therefore, according to Theorem \ref{thm3.9}(iii)-(iv), $a$ is EP.\par 

\indent Next suppose that statement (vi) holds. Then, according to the first identity of statement (vi),
$c^*A=c^{ \dag}A=a^{ \dag}A=a^*A=a^*aA\subseteq bA$. Moreover, according to what
has been proved, the second identity of statement (vi) implies that $c^{-1}(0)\subseteq (b^*)^{-1}(0)=(b^{\dag})^{-1}(0)$.
Therefore, according to the decompositons of $A$
defined by $c^{ \dag}c$ and $bb^{ \dag}$ considered above,
 $c^{-1}(0)=(b^*)^{-1}(0)$ and $c^*A= bA$, which according to  Theorem \ref{thm3.9}(iii)-(iv),
is equivalent to the fact that $a$ is EP.\par

\indent If statement (vii) holds, according to what has been proved, $c^*A= bA$, which, according to 
 Theorem \ref{thm3.9}(iv), implies that $a$ is EP.
\end{proof}

\section{\sfstp  Factorization of the form $a^{\dag}=sa$ }
\
\indent In this section, given a unital Banach algebra $A$, EP elements
of the form $a^{\dag}=sa$ will be characterized, $a,s\in A$. Recall that given a Banach algebra $A$ and $a\in A$, according to \cite[Theorem 18(xviii)]{B}, 
necessary and sufficient for $a$ to be EP is the fact that there is $z\in A^{ -1}$ such that $a^{ \dag}=za$.
In what follows this result will be refined. Compare this section with \cite[section 4]{DKP} and \cite[sections 1.1, 2.1]{DKS}
\begin{thm}\label{thm4.1} 
Let $A$ be a unital Banach algebra and consider $a\in A$ such that $a^{ \dag}$ exists.
Then, the following statements are equivalent.\par
\noindent \rm (i) \it $a$ is EP,\par
\noindent \rm (ii) \it there exists $s\in A$ such that $s^{ -1}(0)=0$ and $a^{\dag}=sa$,\par
\noindent \rm (iii) \it there exist $s_1$ and $s_2 \in A$ such that $a^{\dag}=s_1a$ and $a=s_2a^{\dag}$,\par
\noindent \rm (iv) \it there exists $u\in A$ such that $uA=A$ and $a^{\dag}=au$,\par
\noindent \rm (v) \it there exist $u_1$ and $u_2\in A$ such that $a^{\dag}=au_1$ and $a=a^{\dag}u_2$,\par
\noindent \rm (vi) \it there exists $t\in A$ such that $t_{ -1}(0)=0$ and $a^{\dag}=at$,\par
\noindent \rm (vii) \it there exists $x\in A$ such that $Ax=A$ and $a^{\dag}=xa$,\par
\noindent \rm (viii) \it there exists $v\in A^{ -1}$ such that  $a^{\dag}a=vaa^{\dag}$,\par
\noindent \rm (ix) \it there exists $v_1\in A$ such that $v_1^{ -1}(0)=0$ and $a^{\dag}a=v_1aa^{\dag}$,\par
\noindent  \rm (x) \it there exist $v_2$ and $v_3\in A$ such that $a^{\dag}a=v_2 aa^{\dag}$
and $aa^{\dag} =v_3a^{\dag}a$,\par
\noindent \rm (xi) \it there exists $w\in A^{ -1}$ such that $a^{\dag}a=aa^{\dag}w$,\par
\noindent \rm (xii) \it there exists $w_1\in A$ such that $w_1A=A$ and  $a^{\dag}a=aa^{\dag}w_1$,\par
\noindent  \rm (xiii) \it there exist $w_2$ and $w_3\in A$ such that $a^{\dag}a= aa^{\dag}w_2$
and $aa^{\dag} =a^{\dag}aw_3$,\par
\noindent \rm (xiv) \it there exist $z_1$ and $z_2\in A$ such that $a^{\dag}a=az_1a^{\dag}$ and $aa^{\dag}=a^{\dag}z_2a$.\par
\end{thm}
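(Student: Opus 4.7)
The strategy is to set $p = a^{\dag}a$ and $q = aa^{\dag}$, both hermitian idempotents (Definition \ref{df2}), and to recall that $a$ is EP if and only if $p = q$. By applying Remark \ref{rema102}(ii)--(iii) to left- and right-multiplication operators, this is further equivalent to any one of $aA = a^{\dag}A$, $Aa = Aa^{\dag}$, $a^{-1}(0) = (a^{\dag})^{-1}(0)$, or $a_{-1}(0) = (a^{\dag})_{-1}(0)$, the EP characterizations catalogued in \cite[Theorem 18]{B}. I plan to route each of the hypotheses (ii)--(xiv) through one of these four equalities.

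The direction (i) $\Rightarrow$ (ii)--(xiv) is immediate: for (ii), (iv), (vi), (vii) invoke \cite[Theorem 18(xviii)]{B} and its three analogs obtained by applying the quoted result to $a^{\dag}$ (EP by Remark \ref{rema102}(i)) and in the right-multiplication picture; the paired statements (iii), (v) follow by combining two such factorizations; and for (viii)--(xiv) the equality $p = q$ lets one take all auxiliary elements equal to $e$. The converses of (ii)--(vii) are mechanical substitutions: for (ii), $a^{\dag}x = sax$ with $s^{-1}(0) = 0$ gives $a^{\dag}x = 0 \iff ax = 0$, hence $a^{-1}(0) = (a^{\dag})^{-1}(0)$; (iv), (vi), (vii) are analogous, yielding instead $a^{\dag}A = aA$, $a_{-1}(0) = (a^{\dag})_{-1}(0)$, $Aa^{\dag} = Aa$ respectively. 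The mixed two-equation statements (iii), (v), (xiv) give both inclusions of $Aa = Aa^{\dag}$ or $aA = a^{\dag}A$ by direct substitution; for example in (xiv), left-multiplying $a^{\dag}a = az_1a^{\dag}$ by $a$ (and using $aa^{\dag}a = a$) yields $a \in Aa^{\dag}$, and dually $a^{\dag} \in Aa$.

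The main obstacle is (viii)--(xiii), where the hypothesis relates the two hermitian idempotents $p$ and $q$ directly. In each case it forces either $pq = p$ and $qp = q$, or the mirror $pq = q$ and $qp = p$. For (viii), $p = vq$ with $v \in A^{-1}$ gives $pq = vq \cdot q = p$; squaring $p = vq$ and cancelling $v$ on the left yields $q = qvq$, whence $qp = q \cdot vq = q$. The same bookkeeping handles (ix), (xi), (xii), using cancellation justified by the injectivity or surjectivity hypothesis on the auxiliary element (for (xii), extract a right inverse of $w_1$ from $w_1A = A$), while in (x) and (xiii) the two simultaneous equations supply the relations directly. In every case $pq = p$, $qp = q$ (or its mirror) yields both inclusions of $pA = qA$, and since $p, q$ are hermitian idempotents with the same range, \cite[Theorem 2.2]{P} gives $p = q$, i.e.\ $a$ is EP.
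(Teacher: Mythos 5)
Your treatment of statements (ii)--(vii) and (xiv) is correct and is essentially the paper's own argument: each hypothesis is routed through one of the equalities $a^{-1}(0)=(a^{\dag})^{-1}(0)$, $aA=a^{\dag}A$, $a_{-1}(0)=(a^{\dag})_{-1}(0)$, $Aa=Aa^{\dag}$, and \cite[Theorem 18]{B} concludes. Your idea for (viii)--(xiii) --- extracting algebraic relations between the hermitian idempotents $p=a^{\dag}a$ and $q=aa^{\dag}$ and then invoking uniqueness of hermitian idempotents via \cite[Theorem 2.2]{P} --- is genuinely different from the paper (which reduces (viii)--(x) to kernels and (xi)--(xiii) to right ideals), but it contains a real error.

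The error: your claim that ``$pq=p$, $qp=q$ (or its mirror) yields both inclusions of $pA=qA$'' fails in the non-mirror case, and the non-mirror case is exactly what (viii), (ix) and (x) produce. From $pq=p$ and $qp=q$ you get $p\in Aq$ and $q\in Ap$, hence $Ap=Aq$ --- an equality of \emph{left} ideals --- and no inclusion between the right ideals $pA$ and $qA$ follows: the relation $pq=p$ only rewrites $px$ as $p(qx)$; it does not exhibit $px$ as an element of $qA$. A concrete counterexample (necessarily non-hermitian, but your derivation of the inclusions never uses hermiticity, which enters only afterwards through Palmer's theorem): in the algebra $B=M_2(\Bbb C)$ the idempotents $p=\left(\begin{smallmatrix}1&0\\0&0\end{smallmatrix}\right)$ and $q=\left(\begin{smallmatrix}1&0\\1&0\end{smallmatrix}\right)$ satisfy $pq=p$ and $qp=q$, yet $pB$ (matrices with second row zero) and $qB$ (matrices with both rows equal) are distinct right ideals. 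The repair is straightforward and preserves your strategy: in the case $pq=p$, $qp=q$, either (a) observe that the complementary hermitian idempotents $e-p$, $e-q$ satisfy the mirror relations, so $(e-p)A=(e-q)A$, and Palmer's theorem applied to $L_{e-p}$, $L_{e-q}$ gives $e-p=e-q$; or (b) apply the same uniqueness theorem to the right-multiplication operators $R_p,R_q\in L(A)$, which are hermitian idempotents with equal ranges $Ap=Aq$; or (c) follow the paper: $pq=p$ and $qp=q$ force $N(L_p)=N(L_q)$, i.e.\ $a^{-1}(0)=(a^{\dag})^{-1}(0)$, and \cite[Theorem 18(iii)]{B} concludes. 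Note finally that using \cite[Theorem 2.2]{P}, a statement about operators, for algebra elements requires the standard transfer of hermiticity from $x\in A$ to $L_x$ (or $R_x$) in $L(A)$; the paper makes such transfers implicitly through the results of \cite{B}, so this point is minor once made explicit.
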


\begin{proof} According to  \cite[Theorem 18(xviii)]{B}, if $a\in A$ is EP, then
statement (ii)  and (iii) hold. On the other hand, if statement (ii) or (iii) holds, then
$a^{-1}(0)=(a^{\dag})^{-1}(0)$. Consequently, according to \cite[Theorem 18(iii)]{B}, $a$
is EP.\par
\indent Similarly, if $a$ is EP, according to  \cite[Theorem 18(xvii)]{B}, statements (iv) and (v)
holds. If, on the other hand, statements (iv) or (v) holds, then $aA=a^{\dag}A$. In particular,
according to  \cite[Theorem 18(iv)]{B}, $a$ is EP.\par

\indent To prove the equivalences among statement (i) and statements (vi)-(vii)
apply arguments similar to the ones in the proofs of the equivalences among statement (i), (ii)
and (iv), and use  \cite[Theorem 18(xvii)]{B} and \cite[Theorem 18(viii)-(ix)]{B}.\par

\indent If $a$ is EP, clearly statements (viii)-(x) hold. On the other hand, if
one of the statements (viii)-(x) holds, then it is not difficult to prove that 
$a^{-1}(0)=(a^{\dag})^{-1}(0)$ (recall that according to Remark \ref{rem101}, $(aa^{\dag})^{-1}(0)=(a^{\dag})^{-1}(0)$
and $(a^{\dag}a)^{-1}(0) =a^{-1}(0) $). However, according to \cite[Theorem 18(iii)]{B}
 $a$ is EP.\par

\indent In a similar way, using in particular  \cite[Theorem 18(iv)]{B}, the equivalence among the condition of being EP
and statements (xi)-(xiii) can be proved.\par

\indent It is clear that if $a$ is EP, then statement (xiv) holds. 
On the other hand, statement (xiv) implies that $a^{-1}(0)=(a^{\dag})^{-1}(0)$. Therefore, according to \cite[Theorem 18(iii)]{B},
$a$ is EP.
\end{proof}

\indent In the following theorem  the condition of
being EP wil be considered in the context of $C^*$-algebras .\par

\begin{thm}\label{thm4.2} 
Let $A$ be a unital $C^*$-algebra and consider $a\in A$ such that $a^{ \dag}$ exists.
Then, the following statements are equivalent.\par
\noindent \rm (i) \it $a$ is EP,\par
\noindent \rm (ii) \it there exists $s\in A$ such that $s^{ -1}(0)=0$ and $a^*=sa$,\par
\noindent \rm (iii) \it there exist $s_1$ and $s_2 \in A$ such that $a^*=s_1a$ and $a=s_2a^*$,\par
\noindent \rm (iv) \it there exists $u\in A$ such that $uA=A$ and $a^*=au$,\par
\noindent \rm (v) \it there exist $u_1$ and $u_2\in A$ such that $a^*=au_1$ and $a=a^*u_2$,\par
\noindent \rm (vi) \it there exists $t\in A$ such that $t_{ -1}(0)=0$ and $a^*=at$,\par
\noindent \rm (vii) \it there exists $x\in A$ such that $Ax=A$ and $a^*=xa$,\par
\noindent \rm (viii) \it there exists $v\in A^{ -1}$ such that  $a^*a=vaa^*$,\par
\noindent \rm (ix) \it there exists $v_1\in A$ such that $v_1^{ -1}(0)=0$ and $a^*a=v_1aa^*$,\par
\noindent  \rm (x) \it there exist $v_2$ and $v_3\in A$ such that $a^*a=v_2 aa^*$
and $aa^* =v_3a^*a$,\par
\noindent \rm (xi) \it there exists $w\in A^{ -1}$ such that $a^*a=aa^*w$,\par
\noindent \rm (xii) \it there exists $w_1\in A$ such that $w_1A=A$ and  $a^*a=aa^*w_1$,\par
\noindent  \rm (xiii) \it there exist $w_2$ and $w_3\in A$ such that $a^*a= aa^*w_2$
and $aa^* =a^*aw_3$,\par
\noindent \rm (xiv) \it there exist $z_1$ and $z_2\in A$ such that $a^*a=az_1a^*$ and $aa^*=a^*z_2a$.\par
\noindent \rm (xv) \it  there exists $h_1\in A^{ -1}$ such that $a^*a= ah_1h_1^*a^*$,\par 
\noindent \rm (xvi) \it  there exists $h_2\in A$ such that $(h_2)^{ -1}(0)=0$ and $a^*a= ah_2h_2^*a^*$,\par 
\noindent \rm (xvii) \it  there exists $h_3\in A$ such that $h_3A=A$ and $a^*a= ah_3h_3^*a^*$.\par 
\end{thm}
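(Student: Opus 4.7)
The strategy is to parallel the proof of Theorem~\ref{thm4.1}, exploiting the $C^*$-algebra dictionary of Remark~\ref{rem105}: $(a^*)^{-1}(0)=(a^\dag)^{-1}(0)$, $(a^*)_{-1}(0)=(a^\dag)_{-1}(0)$, $a^*A=a^\dag A$, $Aa^*=Aa^\dag$, together with Lemma~\ref{lem3.8}(i)--(ii) providing $v,w\in A^{-1}$ with $a^\dag=a^*v=wa^*$. For statements (ii)--(vii), the reduction to the corresponding items of Theorem~\ref{thm4.1} is essentially automatic: left-multiplying the identity $a^*=sa$ by $w\in A^{-1}$ gives $a^\dag=(ws)a$ with $(ws)^{-1}(0)=s^{-1}(0)$, and conversely $a^\dag=s'a$ yields $a^*=w^{-1}s'a$; the right-sided statements (iv)--(vi) follow symmetrically via $a^\dag=a^*v$, together with the range identities $a^*A=a^\dag A$ and $Aa^*=Aa^\dag$.

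For the block (viii)--(xiv), the arguments of Theorem~\ref{thm4.1} go through mutatis mutandis, with the projections $aa^\dag$, $a^\dag a$ replaced by $aa^*$, $a^*a$ and the relevant null-space identities $(a^*a)^{-1}(0)=a^{-1}(0)$ and $(aa^*)^{-1}(0)=(a^*)^{-1}(0)=(a^\dag)^{-1}(0)$ (from the paragraph just before Theorem~\ref{thm3.10}). Each converse reduces to $a^{-1}(0)=(a^\dag)^{-1}(0)$, hence to EP via \cite[Theorem~18(iii)]{B}. For the direct direction I would exhibit explicit invertible factors; for (viii), $v:=(a^*a)(aa^*)^\dag+(e-aa^\dag)$ works, because when $a$ is EP the element $aa^*$ is invertible in the reduced subalgebra $pAp$ (with $p:=aa^\dag=a^\dag a$), so $v$ decomposes as an invertible block in $pAp$ plus the identity on $(e-p)A(e-p)$, and a direct computation using $pa^*=a^*$ and $(aa^*)^\dag(aa^*)=p$ yields $vaa^*=a^*a$. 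Items (ix)--(xiv) are handled analogously.

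The genuinely new content is (xv)--(xvii). The implications (xv)$\Rightarrow$(xvi) and (xv)$\Rightarrow$(xvii) are immediate. For (xvii)$\Rightarrow$(i), I would invoke $C^*$-positivity: if $ahh^*a^*x=0$, then $(h^*a^*x)^*(h^*a^*x)=x^*ahh^*a^*x=0$, whence $h^*a^*x=0$; the hypothesis $hA=A$ provides $u\in A$ with $hu=e$ and hence $u^*h^*=e$, so $a^*x=u^*(h^*a^*x)=0$. Combined with $a^*a=ahh^*a^*$ and $(a^*a)^{-1}(0)=a^{-1}(0)$, this gives $a^{-1}(0)=(a^*)^{-1}(0)=(a^\dag)^{-1}(0)$, hence EP. The implication (xvi)$\Rightarrow$(i) is obtained by an analogous route, using the injectivity $h_2^{-1}(0)=0$ in place of the right-invertibility of $h_3$ and invoking $C^*$-positivity at the appropriate step to extract the null-space identity. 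For the converse (i)$\Rightarrow$(xv) I would take $h_1:=a^\dag a^*+(e-aa^\dag)$: in $pAp$ it equals the product of the two elements $a^\dag$ and $a^*$, each invertible there, while on $(e-p)A(e-p)$ it acts as the identity, with global inverse $(a^\dag)^*a+(e-aa^\dag)$; the EP identities $ap=pa=a$ and $a^*p=pa^*=a^*$ yield $ah_1=a\cdot a^\dag a^*=a^*$ and $h_1^*a^*=a$, hence $ah_1h_1^*a^*=a^*\cdot a=a^*a$.

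The main obstacle I foresee is the converse direction of (xv), specifically producing a \emph{globally} invertible $h_1\in A^{-1}$: the natural candidate $a^\dag a^*$ is only invertible in the corner $pAp$, so the extension by $(e-aa^\dag)$ must be chosen so that the off-diagonal cross terms vanish in $ah_1h_1^*a^*$, a fact that hinges on the EP identities $ap=a=pa$. A secondary subtlety is (xvi)$\Rightarrow$(i), where the one-sided injectivity of $h_2$ must be combined with $C^*$-positivity to yield the desired null-space identity; the rest of the proof is mechanical bookkeeping around the substitution $a^\dag\leftrightarrow a^*$ licensed by Remark~\ref{rem105} and Lemma~\ref{lem3.8}.
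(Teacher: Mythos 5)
Your reductions of (ii)--(vii) to Theorem \ref{thm4.1} via Lemma \ref{lem3.8}, your explicit block constructions for (viii) and (xv) (invertible corner element in $pAp$ plus $e-p$), and your positivity argument for (xvii)$\Rightarrow$(i) are all sound. In fact your treatment of (xvii) --- from $ah_3h_3^*a^*x=0$ deduce $h_3^*a^*x=0$ via $y^*y=0\Rightarrow y=0$, then left-invert $h_3^*$ by $u^*$ --- is cleaner than the paper's, which argues with ranges and quietly applies the identity $bb^*A=bA$ to $b=ah_3$ without noting that $ah_3$ is Moore--Penrose invertible (it is regular: $ah_3=(ah_3)(ua^{\dag})(ah_3)$), which that identity requires.

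The genuine gap is (xvi)$\Rightarrow$(i), exactly the step you defer to an ``analogous route.'' The analogy breaks: after positivity gives $h_2^*(a^*x)=0$, you need injectivity of \emph{left multiplication by} $h_2^*$, whereas the hypothesis $(h_2)^{-1}(0)=0$ is injectivity of left multiplication by $h_2$; in a noncommutative $C^*$-algebra the latter does not imply the former. Moreover this cannot be repaired, because the implication is false as stated. Take $A=L(\ell^2)$, let $S$ be the unilateral shift ($Se_n=e_{n+1}$), and put $a=S^*$, $h_2=S^2$. Then $a^{\dag}=S$ exists, $h_2^{-1}(0)=0$ since $S^2$ is injective, and, using $S^*S=e$,
\[
ah_2h_2^*a^*=S^*S^2(S^*)^2S=\bigl(S^*S^2\bigr)\bigl((S^*)^2S\bigr)=SS^*=a^*a ,
\]
so statement (xvi) holds; yet $aa^{\dag}=S^*S=e\neq SS^*=a^{\dag}a$, so $a$ is not EP. (This example does not satisfy (xvii): $h_3A=A$ forces $h_3h_3^*$ to be invertible, which is precisely what your positivity argument exploits and what (xvi) lacks.) Note that the paper's own proof conceals the same false step behind ``since $(h_2)^{-1}(0)=0$, it is not difficult to prove that $a^{-1}(0)=(a^*)^{-1}(0)$,'' so the defect lies in item (xvi) of the statement itself; but as a proof proposal, your claim that this case goes through analogously is the one step that fails, and it is worth recording that (xvi) must either be dropped or strengthened (e.g.\ to $(h_2^*)^{-1}(0)=0$, or $h_2$ invertible) for the equivalence to hold.
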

\begin{proof}According to Theorem  \ref{thm4.1}(ii), $a$ is EP if and only if 
there exists $\tilde{ s}\in A$ such that $(\tilde{ s})^{ -1}(0)=0$ and $a^{\dag}=\tilde{ s}a$. Now well, since 
according to Lemma \ref{lem3.8}(ii) there exists $w\in A^{-1}$ such that $a^{\dag}=wa^*$, 
if $s=w^{-1}\tilde{ s}$, then  $s^{ -1}(0)=0$ and  $a^*=sa$.
On the other hand, if statement (ii) holds, then $(a^*)^{ -1}(0)=a^{ -1}(0)$. Thus,
according to  \cite[Theorem 3.1(iv)]{ K}, $a$ is EP.\par
\indent  To prove that statement (i) is equivalent to statements (iii)-(vii),
apply an argument similar to the one in the previous paragraph, using in particular the corresponding statements of  Theorem  \ref{thm4.1} and Lemma \ref{lem3.8}(i)-(ii).\par
\indent Next, if $a$ is EP, then, according to Lemma \ref{lem3.8}(i)-(ii), there exist $v,w\in A^{ -1}$ such that 
$a^{ \dag}=a^*v$ and $a^{\dag}=wa^*$. Then, according to  Lemma \ref{lem3.8},
$$
a^*a=w^{-1}a^{\dag}a= w^{-1}aa^{\dag}=w^{-1}aa^*v=w^{-1}vaa^*.
$$
Consequently, since $v$, $w\in A^{ -1}$, statement (viii) holds. In addition, it
is clear that statement (viii) implies statement (ix) and (x). On the other hand,
if one of these statements holds, then it is not difficult to prove that
$a^{ -1}(0)=(a^*)^{ -1}(0)$ (recall that $(aa^*)^{-1}(0)=(a^*)^{-1}(0)$ and $(a^*a)^{-1}(0)=a^{-1}(0)$). Therefore, 
according to \cite[Theorem 3.1(iv)]{ K}, $a$ is EP.\par

\indent To prove that statements (xi)-(xiii) are equivalent to the fact that
$a$ is EP, use an argument similar to the one in the previous paragraph and the identities $aa^*A=aA$ and $a^*aA=a^*A$ 
(\cite[Lemma 1.1(ii)]{DKS}).\par

\indent If $a$ is EP, according to  of \cite[Theorem 3.1(vi)-(vii)]{ K},
there exist  $m_1$, $m_2$, $m_3$ and $m_4$ such that 
$a=a^*m_1= m_2a^*$ and $a^*=am_3=m_4a$. As a result,
$a^*a= a(m_3m_2)a^*$ and $aa^*=a^*(m_1m_4)a$. Therefore, 
statement (xiv) holds. On the other hand, if this statement 
holds, then it is not difficult to prove that $a^{ -1}(0)=(a^*)^{ -1}(0)$. Consequentely,
according to \cite[Theorem 3.1(iv)]{ K}, $a$ is EP.\par

\indent Recall that if $a$ is EP, then according to \cite[Theorem 3.1(viii)]{ K},
 there exists $h_1\in A^{ -1}$ such that $a^*=ah_1$. Thus, statement (xv)
holds. Clearly, statement (xv) implies statements (xvi)-(xvii).\par

\indent On the other hand, if statement (xvi) holds, then
$a^*a= (ah_2)(ah_2)^*$. Consequently, a straightforward calculation proves that
$a^{ -1}(0)=((ah_2)^*)^{ -1}(0)$. However, since $(h_2)^{ -1}(0)=0$,
it is not difficult to prove that $a^{ -1}(0)=(a^*)^{ -1}(0)$. Thus,
according to \cite[Theorem 3.1(iv)]{ K}, $a$ is EP.\par

\indent Finally, if statement (xvii) holds, then $a^*a= (ah_3)(ah_3)^*$.
Therefore, since $h_3A=A$, $a^*A=ah_3A=aA$ (\cite[Lemma 1.1(ii)]{DKS}).
However, according to \cite[Theorem 3.1(vi)]{ K}, $a$ is EP.
\end{proof}

\section{\sfstp Factorization of the form $a=ucv$}
\
\indent In this section, given a unital Banach algebra $A$, EP elements of the form
 $a=ucv$ will be studied, $a,u,c,v\in A$; compare
 with  \cite[sections 1.2]{DKS}. However, in first place EP
Banach space operators will be characterized as block operators. 
Note that unlike to the Hilbert space context where the direct sum of two Hilbert spaces is again
a Hilbert space, there is no canonical way to give a norm to the direct sum of two Banach spaces. 
Moreover, given a Hilbert space $H$ and $H_1$ and $H_2$ two orthogonal and complementary
subspaces of $H$, $H_1\oplus H_2$ with its canonical Hilbert norm is isometrically isomorphic to
$H$. However, in the case of two closed and complementary subspaces $X_1$ and $X_2$ of a fixed Banach space $X$,
although the sum norn is equivalent to the original one, the natural identification between $X_1\oplus X_2$ and $X$ is not in general an isometry.
Consequently, since the norm is a key concept involved in
the notions of hermitian and Moore-Penrose invertible Banach space operators (\cite[Remark 4]{B}), and since an isometry is in general necessary to preserve 
the property of being hermitian and Moore-Penrose invertible (\cite[Remark 9]{B}), some results of \cite[section 3]{DKP}
can not be reformulated in terms of Banach space isomorphisms. Compare the results presented in this section with   \cite[section 3]{DKP}.\par

\indent In the following proposition, given two Banach spaces $X_1$ and $X_2$, $X_1\oplus_pX_2$ will denote the Banach space
$X_1\oplus X_2$ with the $p$-norm, $1\le p\le \infty$, see \cite[page 74]{C}. Note that the identity maps of
$X_1$ and $X_2$ will be denoted by $I_1$ and $I_2$ respectivelly.\par

\begin{pro} \label{prop5.1}Let $X_1$ and $X_2$ be two Banach space and consider $T_1\in L(X_1)$ a Banach
space isomorphism. Then, $T_1\oplus 0\in L(X_1\oplus_p X_2)$ has a Moore-Penrose inverse. Furthermore,
$(T_1\oplus 0)^{\dag}= (T_1^{-1}\oplus 0)$ and $T_1\oplus 0$ is an EP operator.
\end{pro}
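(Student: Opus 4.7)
The plan is to verify directly that $S := T_1^{-1}\oplus 0$ satisfies the four axioms of Definition \ref{def2.3} with respect to $T := T_1\oplus 0$, which by Lemma \ref{lem2.4} forces $S=T^{\dag}$, and then to read off the EP property from the resulting commutation.

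First, I would compute the two products. Since $T_1$ is invertible, $T_1T_1^{-1}=I_1=T_1^{-1}T_1$, so on $X_1\oplus_p X_2$ one gets
\[
TS = (T_1\oplus 0)(T_1^{-1}\oplus 0) = I_1\oplus 0 = (T_1^{-1}\oplus 0)(T_1\oplus 0) = ST.
\]
From this the two generalized-inverse identities are immediate: $TST=(T_1\oplus 0)(I_1\oplus 0)=T_1\oplus 0=T$ and similarly $STS=S$. At the same time $TS=ST$ is exactly the commutation that gives the EP property, provided we know $S$ is the Moore-Penrose inverse.

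The substantive step is showing that the idempotent $P := I_1\oplus 0\in L(X_1\oplus_p X_2)$ is hermitian. I would use Definition \ref{df1} directly. Because $P$ is a diagonal operator with $P^k=P$ for all $k\ge 1$, a routine series computation gives $\exp(itP)=(e^{it}I_1)\oplus I_2$. For an arbitrary $(x_1,x_2)\in X_1\oplus_p X_2$ and $1\le p<\infty$,
\[
\|\exp(itP)(x_1,x_2)\|_p^p = \|e^{it}x_1\|^p + \|x_2\|^p = \|x_1\|^p+\|x_2\|^p = \|(x_1,x_2)\|_p^p,
\]
so $\exp(itP)$ is an isometry, hence has norm $1$; the case $p=\infty$ is analogous with the max replacing the sum. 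Thus $P$ is hermitian, so both $TS$ and $ST$ are hermitian idempotents.

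Putting the pieces together, $S$ satisfies every clause of Definition \ref{def2.3}, so by Lemma \ref{lem2.4} we conclude $(T_1\oplus 0)^{\dag}=T_1^{-1}\oplus 0$; and the identity $TS=ST$ already established then says exactly that $T_1\oplus 0$ is EP. The only real obstacle is the hermitian verification, and it hinges on the $p$-norm: the argument works because the $p$-norm treats the two summands independently, so multiplying the first coordinate by a unimodular scalar preserves the norm. This is also why the result is not phrased in terms of an arbitrary Banach-space direct sum, consistent with the isometry caveat the authors flag in the opening of the section.
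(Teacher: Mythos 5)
Your proof is correct and follows essentially the same route as the paper's: verify that $T_1^{-1}\oplus 0$ is a normalized generalized inverse with $TS=ST=I_1\oplus 0$, and then establish the hermitian property of this projection via the computation $\exp(itP)=e^{it}I_1\oplus I_2$ and the fact that the $p$-norm makes this an isometry. The only difference is that you spell out the norm-one verification that the paper leaves as a ``straightforward calculation,'' which is a welcome bit of extra detail rather than a different argument.
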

\begin{proof} It is clear that  $T_1^{-1}\oplus 0$ is a normalized generalized inverse of $T_1\oplus 0$.
 Note that  $(T_1\oplus 0)(T_1^{-1}\oplus 0)=(T_1^{-1}\oplus 0)(T_1\oplus 0)=I_1\oplus 0=P_1$,
where $P_1\in L(X_1\oplus_p X_2)$ is the projection onto $X_1$.
Therefore, in order to conclude the proof, it is enough to prove that $P_1$ is an hermitian idempotent.
However, a straightforward calculation shows that $exp(itP_1)=P_2 + e^{it}P_1$, where $P_2\in L(X_1\oplus_p X_2)$
is the projection onto $X_2$ and $t\in \Bbb R$. As a result, $\parallel  exp(itP_1)\parallel =1$, for all $t\in \Bbb R$,
equivalently $P_1$ is an hermitian map.
\end{proof}

\indent Next the case of complementary closed subspaces of a given Banach spaces will be studied.\par

\begin{pro}\label{prop5.2}Let $X_1$ and $X_2$ be two Banach spaces and consider $T_1\in L(X_1)$
an iso-
morphic operator. Let $X$ be a Banach space and consider $T\in L(X)$ such that there exists
a linear and bounded isomorphism $J\colon X_1\oplus_1 X_2\to X$ with the property 
$T=J(T_1\oplus 0)J^{-1}$. Then, the following statements are equivalent.\par
\noindent \rm (i) \it $T$ has a Moore-Penrose inverse,\par
\noindent \rm (ii) \it $T$ is EP,\par
\noindent \rm (iii) \it $Q_1=J(I_1\oplus 0)J^{-1}\in L(X)$ is a hermitian idempotent,\par 
\noindent \rm (iv) \it $Q_2=J(0\oplus I_2)J^{-1}\in L(X)$ is a hermitian idempotent.\par 
\noindent In particular, if $J$ is an isometry, the four statements hold.\par
 \end{pro}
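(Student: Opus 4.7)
The plan is to close the cycle (iii) $\Rightarrow$ (ii) $\Rightarrow$ (i) $\Rightarrow$ (iii), to verify (iii) $\Leftrightarrow$ (iv) directly, and to handle the isometric case at the end.

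For (iii) $\Leftrightarrow$ (iv), I use the identity $Q_1 + Q_2 = J(I_1 \oplus I_2)J^{-1} = I_X$, so that $Q_2 = I_X - Q_1$. Since $I_X$ is hermitian and $\mathcal{H}\subseteq L(X)$ is a real linear subspace stable under the map $a\mapsto I_X - a$, the two statements are equivalent; the idempotency of $Q_1$, $Q_2$ follows at once from $(I_j \oplus 0)^2 = I_j \oplus 0$ and the conjugation by $J$.

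For (iii) $\Rightarrow$ (ii), I introduce the candidate $S = J(T_1^{-1} \oplus 0)J^{-1} \in L(X)$. A direct block computation gives $TS = ST = J(I_1 \oplus 0)J^{-1} = Q_1$, together with $TST = Q_1 T = T$ and $STS = SQ_1 = S$. If $Q_1$ is a hermitian idempotent, the four conditions of Definition \ref{def2.3} are fulfilled and $S = T^{\dag}$; then $TT^{\dag} = T^{\dag}T = Q_1$ shows that $T$ is EP. The step (ii) $\Rightarrow$ (i) is immediate from the definitions.

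The crucial direction (i) $\Rightarrow$ (iii) is the main obstacle. By Proposition \ref{prop2.5}(i), the existence of $T^{\dag}$ furnishes hermitian idempotents $P, Q \in L(X)$ with $N(P) = N(T) = J(0 \oplus X_2)$ and $R(Q) = R(T) = J(X_1 \oplus 0)$. The block structure already provides the topological decomposition $X = R(T) \oplus N(T)$, and $Q_1$ is the unique projection onto $R(T)$ along $N(T)$. The plan is to identify $Q_1$ with $TT^{\dag} = Q$ (equivalently with $T^{\dag}T = P$) by exploiting the block-diagonal form of $T$ to show that $N(T^{\dag}) = N(T)$, or equivalently $R(T^{\dag}) = R(T)$; this in turn gives $T$ EP by Remark \ref{rema102}(iii) and, by the uniqueness of hermitian idempotents with a prescribed range, $Q_1 = TT^{\dag}$ is hermitian. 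The hard part lies precisely in matching the null space of the hermitian projection $Q$ onto $R(T)$ with $N(T)$: a priori $Q$ projects along $N(T^{\dag})$, which need not coincide with $N(T) = N(Q_1)$, and the proof must extract this coincidence from the specific factorization $T = J(T_1\oplus 0)J^{-1}$ with $T_1$ invertible.

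Finally, for the isometric case, Proposition \ref{prop5.1} shows that $I_1 \oplus 0 \in L(X_1 \oplus_1 X_2)$ is a hermitian idempotent. If $J$ is an isometry, then $\|e^{it\,JAJ^{-1}}\| = \|J e^{itA} J^{-1}\| = \|e^{itA}\|$ for every $A\in L(X_1\oplus_1 X_2)$ and every $t\in\mathbb R$, so conjugation by $J$ preserves hermiticity. Consequently $Q_1 = J(I_1 \oplus 0)J^{-1}$ is a hermitian idempotent in $L(X)$, statement (iii) holds, and the equivalence just established forces (i)--(iv).
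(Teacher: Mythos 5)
The parts you actually prove are correct and coincide with the paper's own argument: the paper likewise takes $T'=J(T_1^{-1}\oplus 0)J^{-1}$, observes that it is a normalized generalized inverse of $T$ with $TT'=T'T=Q_1=I-Q_2$, and settles the isometric case by Proposition \ref{prop5.1} together with the fact that conjugation by an isometry preserves hermiticity. So your (iii) $\Leftrightarrow$ (iv), (iii) $\Rightarrow$ (ii) $\Rightarrow$ (i), and the final claim are fine, and they follow the paper's route.

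The gap is the remaining direction (i) $\Rightarrow$ (iii): you describe a plan (identify $Q_1$ with $TT^{\dag}$ by proving $N(T^{\dag})=N(T)$) and then concede that you do not know how to extract this coincidence from the factorization. That concession cannot be repaired, because the implication is false. Take $X=\Bbb C^2$ with the Euclidean norm, $X_1=X_2=\Bbb C$, $T_1=I_1$, and $J(x_1\oplus x_2)=(x_1+x_2,\,-x_2)$; then $T=J(I_1\oplus 0)J^{-1}=\left(\begin{smallmatrix}1&1\\0&0\end{smallmatrix}\right)$ and $Q_1=T$. Since $L(\Bbb C^2)$ is a $C^*$-algebra, hermitian means self-adjoint, so the Moore--Penrose inverse of Definition \ref{def2.3} is the classical one: $T^{\dag}=\frac{1}{2}\left(\begin{smallmatrix}1&0\\1&0\end{smallmatrix}\right)$ exists and (i) holds, yet $TT^{\dag}=\left(\begin{smallmatrix}1&0\\0&0\end{smallmatrix}\right)\ne\frac{1}{2}\left(\begin{smallmatrix}1&1\\1&1\end{smallmatrix}\right)=T^{\dag}T$, so $T$ is not EP and $Q_1=T$ is not hermitian. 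The structural reason is the one you yourself isolated: existence of $T^{\dag}$ yields hermitian idempotents onto $R(T)$ along $N(T^{\dag})$ and onto $R(T^{\dag})$ along $N(T)$, but nothing ties either of these to the projection $Q_1$ onto $R(T)$ along $N(T)$ determined by the factorization. You should also be aware that the paper's own proof has exactly the same defect: from $TT'=T'T=Q_1=I-Q_2$ it concludes that ``statements (i)--(iv) are equivalent,'' which tacitly assumes that any Moore--Penrose inverse of $T$ must coincide with the commuting (group) inverse $T'$. What is actually true --- and what your argument, supplemented by one line, does establish --- is that (ii), (iii), (iv) are mutually equivalent and each implies (i): for (ii) $\Rightarrow$ (iii), note that if $T$ is EP then $TT^{\dag}=T^{\dag}T$ is an idempotent with range $R(T)$ and null space $N(T)$, hence equals $Q_1$, since an idempotent is determined by its range and null space. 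Statement (i) should either be deleted from the equivalence or read as ``$T'$ is the Moore--Penrose inverse of $T$.''
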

\begin{proof} It is not difficult to prove that $T'=J(T_1^{-1}\oplus 0)J^{-1}$ is a normalized generalized
inverse of $T$ and that $Q_1$ and $Q_2$ are the projections onto  the closed and complemented 
subspaces $J(X_1\oplus 0)$ and $J(0\oplus X_2)$ respectively. Furthermore, since $TT'=T'T=Q_1=I-Q_2$, 
statements (i)-(iv) are equivalent. Concerning the last statement, apply Proposition \ref{prop5.1} and \cite[Remark 9]{B}. 
\end{proof}

\indent As a result, the characterization of $EP$ bounded and linear maps as block operators
can be stated.\par

\begin{thm}\label{thm5.3} Let $X$ be a Banach space and consider $T\in L(X)$. Then, the following
statements are equivalent.\par
\noindent \rm (i) \it $T$ is EP,\par
\noindent\rm (ii) \it There exist two Banach spaces $X_1$ and $X_2$, $T_1\in L(X_1)$ an isomorphic
operator, and $J\colon X_1\oplus_1 X_2\to X$ a linear and bounded isomorphism such that
$T=J(T_1\oplus 0)J^{-1}$ and $J(I_1\oplus 0)J^{-1}\in L(X)$ is a hermitian idempotent.\par
\end{thm}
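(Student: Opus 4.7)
The implication (ii) $\Rightarrow$ (i) falls out directly from Proposition \ref{prop5.2}: the hypothesis of (ii) is precisely statement (iii) of that proposition, which is equivalent there to $T$ being EP. So the substance of the theorem lies in the converse.

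For (i) $\Rightarrow$ (ii), the plan is to read off $X_1$, $X_2$, $T_1$ and $J$ directly from the EP decomposition. Taking $X_1 = R(T)$ and $X_2 = N(T)$ with their subspace norms, the EP hypothesis combined with Remark \ref{rem101} and Remark \ref{rema102}(iii) gives $N(T) = N(T^{\dag})$, $R(T) = R(T^{\dag})$, the direct sum $X = X_1 \oplus X_2$, and the fact that the associated projection onto $X_1$ along $X_2$ is the hermitian idempotent $TT^{\dag} = T^{\dag}T$. I would then define $T_1 = T\mid_{X_1}^{X_1}$, which is well-defined because $T(R(T))\subseteq R(T)$, injective because $R(T)\cap N(T) = 0$, and surjective because every $y = Tx \in R(T)$ equals $T_1 x_1$, where $x_1$ is the $X_1$-component of $x$. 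Thus $T_1$ is a bounded algebraic bijection of the Banach space $X_1$, hence an isomorphism by the open mapping theorem.

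The final ingredient is $J \colon X_1\oplus_1 X_2 \to X$, the sum map $(x_1,x_2)\mapsto x_1+x_2$. It is linear, bounded with respect to the $1$-norm and bijective thanks to the direct sum decomposition of $X$, hence again by the open mapping theorem a bounded isomorphism. A short computation checks that for $x = x_1 + x_2$, $J(T_1\oplus 0)J^{-1}x = T_1 x_1 = Tx$, while $J(I_1\oplus 0)J^{-1}$ is the projection onto $R(T)$ along $N(T)$, namely $TT^{\dag}$, which is hermitian by the definition of the Moore-Penrose inverse.

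The only subtle point I expect is conceptual rather than computational: the identification $J(I_1\oplus 0)J^{-1} = TT^{\dag}$ must be read as a genuine operator equality on $X$, independent of the particular equivalent norm placed on $X_1\oplus X_2$. Since $J$ is not required to be an isometry, one should not expect hermiticity to be transferred \emph{through} $J$ (this is why Proposition \ref{prop5.1} had to single out the isometric case); but one does obtain the hermitian idempotent viewed intrinsically on $X$, which is exactly what (ii) demands. Beyond this care, the argument is pure bookkeeping of the EP decomposition.
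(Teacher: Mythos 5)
Your proof is correct and follows essentially the same route as the paper: both directions rest on Proposition \ref{prop5.2} for (ii)$\Rightarrow$(i), and for (i)$\Rightarrow$(ii) both take $X_1=R(T)$, $X_2=N(T)$, $T_1=T\mid_{X_1}^{X_1}$ and the sum map $J(x_1\oplus x_2)=x_1+x_2$. The only cosmetic difference is that the paper invokes \cite[Theorem 13]{B} to produce the hermitian idempotent $P$ with $R(P)=R(T)$ and $N(P)=N(T)$, whereas you construct it directly as $TT^{\dag}=T^{\dag}T$, which is the same operator.
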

\begin{proof} According to Proposition \ref{prop5.2}, statement (ii) implies that $T$ is EP.
On the other hand, if $T$ is EP, according to \cite[Theorem 13]{B}, there exist $P\in L(X)$
a hermitian idempotent such that $N(P)=N(T)$ and $R(P)=R(T)$. Denote then
$X_1=R(P)$, $X_2=N(P)$ and $T_1=T\mid_{X_1}^{X_1}\colon X_1\to X_1$. It is clear that $T_1\in L(X_1)$
is an isomorphism. Moreover, $J\colon X_1\oplus_1 X_2\to X$ is the map
$J(x_1\oplus x_2)= x_1+x_2$. Since $J^{-1}\colon X\to X_1\oplus_1 X_2$ is such that
$J^{-1}=P\oplus (I-P)$, $I\in L(X)$ the identity map, $T=J(T_1\oplus 0)J^{-1}$ and $ J(I_1\oplus 0)J^{-1}=P$.
\end{proof}

\indent In the following theorem instead of isomorphic operators, injective and surjective bounded and linear
maps will be considered.\par

\begin{thm}\label{thm5.4}Let $X$ be a Banach space and consider $T\in L(X)$. Then, the
following statements are equivalent.\par
\noindent  \rm (i) \it  $T$ is EP,\par
\noindent \rm (ii) \it there exist Banach spaces $X_1$ and $X_2$, $T_1\in L(X_1)$ an isomorphism, $S\in L(X_1\oplus_1 X_2, X)$
injective, $U\in L(X,X_1\oplus_1 X_2)$ surjective, and $P\in L(X)$ a hermitian idempotent such that
$T=S(T_1\oplus 0)U$, $R(P)=S(X_1\oplus 0)$ and $N(P)=U^{-1}(0\oplus X_2)$.  
\end{thm}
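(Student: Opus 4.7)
The plan is to dispose of the direction (i) $\Rightarrow$ (ii) by invoking Theorem \ref{thm5.3} directly, and to concentrate effort on the converse. If $T$ is EP, Theorem \ref{thm5.3} supplies Banach spaces $X_1,X_2$, an isomorphism $T_1 \in L(X_1)$, and a bounded linear isomorphism $J \colon X_1 \oplus_1 X_2 \to X$ together with the hermitian idempotent $P = J(I_1 \oplus 0)J^{-1}$ satisfying $T = J(T_1 \oplus 0)J^{-1}$. Setting $S = J$ and $U = J^{-1}$ immediately witnesses the required factorization: $S$ is injective, $U$ is surjective, and a direct computation shows $R(P) = J(X_1 \oplus 0) = S(X_1 \oplus 0)$ and $N(P) = J(0 \oplus X_2) = U^{-1}(0 \oplus X_2)$.

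For (ii) $\Rightarrow$ (i) the first step is to read off the kernel and range of $T$ from the factorization. Since $U$ is surjective and $T_1$ is a bijection on $X_1$, one computes $R(T) = S((T_1 \oplus 0)(X_1 \oplus_1 X_2)) = S(X_1 \oplus 0) = R(P)$. Since $S$ is injective and $T_1$ is injective, one has $N(T) = U^{-1}((T_1 \oplus 0)^{-1}(0)) = U^{-1}(0 \oplus X_2) = N(P)$.

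Once these identifications are in hand, the hermitian idempotent $P$ plays the role of both idempotents demanded by Proposition \ref{prop2.5}(i), so $T^{\dag} \in L(X)$ exists. By Remark \ref{rem101}, $TT^{\dag}$ is a hermitian idempotent with $R(TT^{\dag}) = R(T) = R(P)$. The same uniqueness principle used in the proof of Proposition \ref{pro1}---two hermitian idempotents sharing a range must coincide, by \cite[Theorem 2.2]{P}---then forces $TT^{\dag} = P$, so that $N(T^{\dag}) = N(TT^{\dag}) = N(P) = N(T)$. Remark \ref{rema102}(iii) concludes that $T$ is EP.

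The step I expect to be the main obstacle is precisely the identification $TT^{\dag} = P$: this is what lets an arbitrary injective $S$ and surjective $U$---neither of which need be invertible in $L(X_1\oplus_1 X_2, X)$ or $L(X,X_1\oplus_1 X_2)$---substitute for the single isomorphism $J$ of Theorem \ref{thm5.3}. Everything else reduces to routine range/kernel bookkeeping on the three-factor product $S(T_1 \oplus 0) U$.
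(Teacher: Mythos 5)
Your proof is correct, and the forward direction together with the kernel/range computations for $T=S(T_1\oplus 0)U$ (namely $R(T)=S(X_1\oplus 0)=R(P)$ from surjectivity of $U$ and surjectivity of $T_1$, and $N(T)=U^{-1}(0\oplus X_2)=N(P)$ from injectivity of $S$ and of $T_1$) coincide with what the paper does. Where you diverge is in how (ii) $\Rightarrow$ (i) is concluded. The paper finishes in a single stroke: once $P$ is known to be a hermitian idempotent with $R(P)=R(T)$ and $N(P)=N(T)$, it cites \cite[Theorem 13]{B}, which characterizes EP operators exactly by the existence of such a hermitian idempotent, with no need to first establish that $T^{\dag}$ exists or to identify it. You instead rederive this implication from the tools set up in the preliminaries: Proposition \ref{prop2.5}(i) (with $Q=P$) to get existence of $T^{\dag}$, then Palmer's uniqueness theorem \cite[Theorem 2.2]{P} applied to the hermitian idempotents $TT^{\dag}$ and $P$ sharing the range $R(T)$ to force $TT^{\dag}=P$, whence $N(T^{\dag})=N(TT^{\dag})=N(P)=N(T)$ and Remark \ref{rema102}(iii) gives EP. Every step of this chain is valid, and the obstacle you flagged ($TT^{\dag}=P$) is precisely the point that \cite[Theorem 13]{B} lets the paper bypass; in effect your argument amounts to a self-contained proof of the relevant direction of that cited theorem. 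The paper's route is shorter; yours buys independence from that external citation at the cost of invoking three auxiliary results in its place.
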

\begin{proof} If $T$ is EP, then let $X_1$, $X_2$ and $T_1\in L(X_1)$ be as in Theorem \ref{thm5.3} and define
$S=J\in L(X_1\oplus_1 X_2, X)$ and $U=J^{-1}\in L(X,X_1\oplus_1 X_2)$, $J$ as in Theorem  \ref{thm5.3}.
Moreover, define $P=J(I_1\oplus 0)J^{-1}\in L(X)$. Since $R(P)=R(T)=S(X_1\oplus 0)$
and $N(P)=N(T)=U^{-1}(0\oplus X_2)$, statement (ii) holds. On the other hand, if statement (ii) holds,
then $P\in L(X)$ is a hermitian idempotent such that  $R(P)=R(T)$ and $N(P)=N(T)$.
Therefore, according to \cite[Theorem 13]{B}, $T$ is EP.
\end{proof}

\indent Next the conditions of Theorem \ref{thm5.3} will be weekend.\par

\begin{thm} \label{thm5.5} Let $X$ be a Banach space and consider $T\in L(X)$ such that
$T^{\dag}$ exists. Then, the following statements are equivalent.\par
\noindent \rm (i) \it $T$ is EP.\par

\noindent \rm (ii) (a) \it There exist Banach spaces $X_1$ and $X_2$, $A_1\in L(X_1)$
injective, $B_1\in L(X_2)$, $V_1$ and $W_1\in L(X_1\oplus_1 X_2, X)$, $V_1$ injective,
and $S_1\in L(X, X_1\oplus_1 X_2)$  such that $T=V_1(A_1\oplus 0)S_1$
and $T^{\dag}= W_1(B_1\oplus 0)S_1$. \par
\noindent\hskip.7truecm  \rm (b) \it There exist Banach spaces $X_3$ and $X_4$, $A_2\in L(X_3)$,
$B_2\in L(X_2)$ injective, $V_2$ and $W_2\in L(X_1\oplus_1 X_2, X)$, $W_2$ injective,
and $S_2\in L(X, X_1\oplus_1 X_2)$  such that $T=V_2(A_2\oplus 0)S_2$
and $T^{\dag}= W_2(B_2\oplus 0)S_2$. \par

\noindent \rm (iii) (a) \it There exist Banach spaces $Y_1$ and $Y_2$, $A_3\in L(Y_1)$
surjective, $B_3\in L(Y_2)$, $V_3\in L(Y_1\oplus_1 Y_2, X)$
and $S_3, S_4\in L(X, Y_1\oplus_1 Y_2)$, $S_3$ surjective, such that $T=V_3(A_2\oplus 0)S_3$
and $T^{\dag}= V_3(B_3\oplus 0)S_4$, \par
\noindent\hskip.7truecm  \rm (b) \it there exist Banach spaces $Y_3$ and $Y_4$, $A_4\in L(Y_3)$,
$B_4\in L(Y_4)$ surjective, $V_4\in L(Y_3\oplus_1 Y_4), X$
and $S_5$, $S_6\in L(X, Y_3\oplus_1 Y_4)$, $S_6$ surjective, such that $T=V_4(A_4\oplus 0)S_5$
and $T^{\dag}= V_4(B_4\oplus 0)S_6$. \par
\end{thm}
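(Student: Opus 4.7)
The plan is to reduce everything to Remark \ref{rema102}(iii): once we know either $N(T)=N(T^{\dag})$ or $R(T)=R(T^{\dag})$, we can conclude that $T$ is EP. The forward direction (i) $\Rightarrow$ (ii), (iii) will use Theorem \ref{thm5.3} as a template, while the reverse directions will use the injectivity/surjectivity hypotheses to extract containments of null spaces or ranges.

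For (i) $\Rightarrow$ (ii) and (i) $\Rightarrow$ (iii), I would apply Theorem \ref{thm5.3} to write $T=J(T_1\oplus 0)J^{-1}$ with $T_1\in L(X_1)$ an isomorphism and $J\colon X_1\oplus_1 X_2\to X$ a bounded linear isomorphism. A direct verification (analogous to Proposition \ref{prop5.1}) shows that $T^{\dag}=J(T_1^{-1}\oplus 0)J^{-1}$. For (ii)(a) take $V_1=W_1=J$, $A_1=T_1$, $B_1=T_1^{-1}$, $S_1=J^{-1}$; then $V_1$ and $A_1$ are injective since both $J$ and $T_1$ are isomorphisms. For (ii)(b) make the symmetric choice, noting that $B_2=T_1^{-1}$ and $W_2=J$ are injective. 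For (iii)(a) take $V_3=J$, $S_3=S_4=J^{-1}$, $A_3=T_1$, $B_3=T_1^{-1}$, so that $A_3$ and $S_3$ are surjective; (iii)(b) is symmetric with $B_4$ surjective.

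For (ii) $\Rightarrow$ (i), I would translate the hypotheses of (a) directly into a statement about null spaces: from $V_1$ injective and $A_1$ injective one gets
\[
x\in N(T)\iff (A_1\oplus 0)S_1(x)=0\iff S_1(x)\in 0\oplus X_2,
\]
and since the operator $B_1\oplus 0$ also vanishes on $0\oplus X_2$, this forces $T^{\dag}(x)=0$; hence $N(T)\subseteq N(T^{\dag})$. The completely symmetric argument applied to (ii)(b), using injectivity of $W_2$ and $B_2$, yields $N(T^{\dag})\subseteq N(T)$. Combining the two inclusions gives $N(T)=N(T^{\dag})$, and Remark \ref{rema102}(iii) concludes that $T$ is EP.

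For (iii) $\Rightarrow$ (i) the argument is dual, replacing null spaces by ranges. From (iii)(a), the surjectivity of $A_3$ and $S_3$ gives $R(T)=V_3(A_3\oplus 0)S_3(X)=V_3(Y_1\oplus 0)$, while $R(T^{\dag})=V_3(B_3\oplus 0)S_4(X)\subseteq V_3(Y_1\oplus 0)=R(T)$. From (iii)(b), the surjectivity of $B_4$ and $S_6$ gives the reverse inclusion $R(T)\subseteq R(T^{\dag})$, and again Remark \ref{rema102}(iii) yields that $T$ is EP. The only mild obstacle is keeping bookkeeping straight, since the factorizations of $T$ and $T^{\dag}$ in each clause share the outer map but may differ in the inner one ($S_3\ne S_4$, etc.); the key observation is precisely that the common outer map is what allows the two factorizations to combine into a containment of ranges or of null spaces.
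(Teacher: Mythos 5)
Your proposal is correct and takes essentially the same approach as the paper: the forward implications come from Theorem \ref{thm5.3} together with the identification $T^{\dag}=J(T_1^{-1}\oplus 0)J^{-1}$, and the converses recover $N(T)=N(T^{\dag})$ from statement (ii) and $R(T)=R(T^{\dag})$ from statement (iii), concluding via Remark \ref{rema102}(iii). The paper leaves these converse verifications unstated (``it is not difficult to prove''); your null-space and range computations supply exactly the omitted details.
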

\begin{proof} According to Theorem \ref{thm5.3}, if $T$ is EP, then 
statement (ii) and (iii) hold. On the other hand, if statement (ii) hold,
then it no difficult to prove that $N(T)=N(T^{\dag})$. Therefore, according 
to \cite[Theorem 16(ii)]{B}, $T$ is EP. Similarly, if statement (iii) holds, then $R(T)=R(T^{\dag})$. Consequently,
according to  \cite[Theorem 16(iii)]{B}, $T$ is EP.
\end{proof}

\indent Next the Banach algebra frame will be considered.\par

\begin{thm}\label{thm5.6} Let $A$ be a unital Banach algebra, and consider $a\in A$ such that 
$a^{\dag}$ exists. Then, the following statement are equivalent.\par
\noindent \rm (i) \it The element $a$ is EP,\par
\noindent \rm (ii)\it there exist $b_1$, $c_1$, $d_1$, $f_1$ and $g_1\in A$ such that
$a=b_1c_1g_1$, $a^{\dag}= f_1d_1g_1$, $(c_1)^{-1}(0)=(d_1)^{-1}(0)$, and $(b_1)^{-1}(0)=(f_1)^{-1}(0)=0$,\par 
\noindent \rm (iii) \it there exist $h_1$, $k_1$, $l_1$, $m_1$ and $n_1\in A$ such that
$a=h_1k_1l_1$, $a^{\dag}= h_1m_1n_1$, and $l_1A=A=n_1A$, and $k_1A=m_1A$.\par

\noindent \rm (iv)\it there exist $b_2$, $c_2$, $d_2$, $g_2$ and $g_3\in A$ such that
$a=b_2c_2g_2$, $a^{\dag}= b_2d_2g_3$, $(c_2)_{-1}(0)=(d_1)_{-1}(0)$, and $(g_2)_{-1}(0)=(g_3)_{-1}(0)=0$,\par 
\noindent \rm (v) \it there exist $h_2$, $h_3$, $k_2$, $l_2$ and $m_2\in A$ such that
$a=h_2k_2l_2$, $a^{\dag}= h_3m_2l_2$, and $Ak_2=Am_2$, and $Ah_2=Ah_3=A$.\par

\end{thm}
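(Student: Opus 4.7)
The plan is to reduce each of the factorization characterizations (ii)--(v) to one of the four standard one-sided characterizations of EP elements given in \cite[Theorem 18]{B}: for $a\in A$ with $a^{\dag}$ existing, $a$ is EP if and only if $a^{-1}(0)=(a^{\dag})^{-1}(0)$, if and only if $aA=a^{\dag}A$, if and only if $a_{-1}(0)=(a^{\dag})_{-1}(0)$, if and only if $Aa=Aa^{\dag}$. Statements (ii), (iii), (iv), (v) are respectively tailored to these four criteria (left null space, right ideal, right annihilator, left ideal).

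For the forward implications I would produce explicit trivial factorizations built around $e$. For (i)$\Rightarrow$(ii) I would set $b_1=f_1=g_1=e$, $c_1=a$, $d_1=a^{\dag}$: the injectivity of $b_1$ and $f_1$ is automatic since $e^{-1}(0)=0$, and $(c_1)^{-1}(0)=a^{-1}(0)=(a^{\dag})^{-1}(0)=(d_1)^{-1}(0)$ by the EP hypothesis. Entirely parallel choices (with $a$ and $a^{\dag}$ in the middle and $e$'s in the remaining slots) dispatch (i)$\Rightarrow$(iii), (i)$\Rightarrow$(iv) and (i)$\Rightarrow$(v), each invoking the corresponding item of \cite[Theorem 18]{B}.

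For the reverse implications I would strip the auxiliary factors using the injectivity or surjectivity hypotheses. From (ii): since $b_1$ is injective, $a^{-1}(0)=(c_1g_1)^{-1}(0)=\{y\in A:g_1y\in(c_1)^{-1}(0)\}$; since $f_1$ is injective, $(a^{\dag})^{-1}(0)=\{y\in A:g_1y\in(d_1)^{-1}(0)\}$; the equality $(c_1)^{-1}(0)=(d_1)^{-1}(0)$ then forces $a^{-1}(0)=(a^{\dag})^{-1}(0)$, so $a$ is EP. From (iii): $aA=h_1k_1l_1A=h_1k_1A$ because $l_1A=A$, and similarly $a^{\dag}A=h_1m_1A$, so $k_1A=m_1A$ yields $aA=a^{\dag}A$. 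The implications (iv)$\Rightarrow$(i) and (v)$\Rightarrow$(i) are handled symmetrically, replacing $x^{-1}(0)$ and $xA$ with $x_{-1}(0)$ and $Ax$ and invoking items (viii) and (ix) of \cite[Theorem 18]{B}.

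I do not expect any substantive obstacle: the proof is essentially a packaging argument on top of the one-sided ideal characterizations already available in \cite{B}. The only point requiring care is bookkeeping, namely matching each hypothesis in (ii)--(v) to the correct item of \cite[Theorem 18]{B} and keeping the left/right distinction straight, so that left multiplications control $x^{-1}(0)$ and $xA$ while right multiplications control $x_{-1}(0)$ and $Ax$.
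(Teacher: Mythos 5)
Your proposal is correct and follows essentially the same route as the paper: the forward implications via the trivial factorizations built from $e$, $a$ and $a^{\dag}$, and the reverse implications by stripping the auxiliary factors to reduce to the criteria $a^{-1}(0)=(a^{\dag})^{-1}(0)$, $aA=a^{\dag}A$, $a_{-1}(0)=(a^{\dag})_{-1}(0)$ and $Aa=Aa^{\dag}$ from \cite[Theorem 18]{B}. In fact your write-up makes explicit the ``straightforward calculation'' the paper leaves to the reader, and your matching of (ii)--(v) to the left/right null-space and ideal criteria is exactly the paper's.
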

\begin{proof} If $a\in A$ is EP, then consider $b_1=g_1=f_1=e$,
$c_1=a$ and $d_1=a^{\dag}$ and use \cite[Theorem 18(iii)]{B}. 
On the other hand, if  statement (ii) holds, a straightforward
calculation shows that $a^{-1}(0)=(a^{\dag})^{-1}(0)$
which, according again to \cite[Theorem 18(iii)]{B}, implies that $a$ is EP.\par

\indent A similar argument, using in particular that
necessary and sufficient for $a\in A$ to be EP is the fact that 
$aA=a^{\dag}A$ (\cite[Theorem 18(iv)]{B}), proves that statements (i) and (iii)
are equivalent.\par

\indent To prove that statement (i) and statements (iv)-(v) are equivalent,
apply arguments similars to the ones used to prove that the condition
of being EP is equivalent to statements (ii)-(iii), using in particular
 \cite[Theorem 18(viii)-(ix)]{B} instead of statements
\cite[Theorem 18(iii)-(iv)]{B}.
\end{proof}

\indent Note that under  the hypothesis of Theorem \ref{thm5.6},
others statements equivalent to the condition of being EP can be
obtained if instead of $a$ and $a^{\dag}$, $a^{\dag}a$ and 
$aa^{\dag}$ are considered. In fact, using the fact that
$(a^{\dag}a)^{-1}(0)=a^{-1}(0)$ and  $(aa^{\dag})^{-1}(0)=(a^{\dag})^{-1}(0)$,
arguments similar to the ones in  Theorem \ref{thm5.6} prove the corresponding statements.

\vskip.5truecm

\noindent Enrico Boasso\par
\noindent E-mail address: enrico\_odisseo@yahoo.it
\end{document}